\renewcommand{\Delta}{\triangle}
\definecolor{darkblue}{rgb}{0,0,0.7}
\definecolor{darkgreen}{rgb}{0.01,0.75,0.24}
\def \Ee[#1]{\mathcal{E}^{\text{{#1}}}}
\def\pa[#1,#2]{\frac{\partial {#1}}{\partial {#2}} }
\def\idom[#1,#2,#3]{\int_{#1}\hspace{1pt} {#2} \hspace{1pt} \text{d}{#3}}
\def\res[#1,#2]{\left.{#1}\right|_{#2}}
\def\var[#1,#2]{\langle \delta \mathcal{E}^{\text{{#1}}}({#2}),v\rangle}
\def\vars[#1,#2,#3]{\langle \delta^2\mathcal{E}^{\text{{#1}}}({#2})v,{#3}\rangle}
\def\vard[#1,#2,#3,#4]{\langle \delta\mathcal{E}^{\text{{#1}}}({#2})-\delta\mathcal{E}^{\text{{#3}}}({#4}),v\rangle}
\newcommand{\be}{\begin{equation}}
\newcommand{\en}{\end{equation}}
\newcommand{\ben}{\begin{equation*}}
\newcommand{\enn}{\end{equation*}}
\newcommand{\bea}{\begin{aligned}}
\newcommand{\ena}{\end{aligned}}
\def\ba#1\ena{\begin{align}#1\end{align}}
\def\ban#1\enan{\begin{align*}#1\end{align*}}
\theoremstyle{plain}
\newtheorem{thm}{Theorem}[section]
\newtheorem{defn}[thm]{Definition}
\newtheorem{lem}[thm]{Lemma}
\newtheorem{cor}[thm]{Corollary}
\newtheorem{remark}[thm]{Remark}
\numberwithin{equation}{section}
\begin{document}
\title[Posterior convergence of $\mathlarger{\mathlarger{\mathlarger{\alpha}}}$-stable sheets]{Posterior convergence analysis of  $\mathlarger{\mathlarger{\mathlarger{\alpha}}}$-stable sheets}
\author[N. K. Chada] {Neil K. Chada}
\address{Department of Statistics and Applied Probability, National University of Singapore, 119077, Singapore}
\email{neil.chada@nus.edu.sg}

\author[S. Lasanen] {Sari Lasanen}
\address{School of Engineering Science, Lappeenranta-Lahti University of Technology, Yliopistonkatu 34, FI-53850 Lappeenranta, Finland}
\email{sari.lasanen@lut.fi}


\author[L. Roininen] {Lassi Roininen}
\address{School of Engineering Science, Lappeenranta-Lahti University of Technology,  Yliopistonkatu 34, FI-53850 Lappeenranta, Finland}
\email{lassi.roininen@lut.fi}

\maketitle

\begin{abstract}
{This paper is concerned with the theoretical understanding of $\alpha$-stable sheets $U$ on $\mathbb{R}^d$ within the framework of Bayesian inference. Our motivation is in the context of Bayesian inverse problems, where we consider the distributions of  random fields $U$  as qualitatively informative prior distributions.  We derive posterior convergence results referring to finite-dimensional approximations of  infinite-dimensional random variables $U$. In doing so we use a number of variants, which these sheets $U$  can take, such as a stochastic integral representation, but also a random series expansion, namely,  the L\'evy-LePage series. Especially,  we study natural discretizations of  $U$, which arise from independent stable  increments. Our proofs rely on the fact  that the random sheets $U$  omit $L^p$-sample paths.  Aside from convergence of approximations of stable sheets, we address whether both well-definedness and well-posedness of the inverse problem can be attained. }

\end{abstract}
\tableofcontents
\noindent
{\textbf{AMS subject classifications:} 60G52, 60G17, 62M40, 35R30. \\
\textbf{Keywords}: $\alpha$-stable fields, Bayesian inverse problems, well-posedness, \\ $L^p$-convergence, well-definedness.}


\section{Introduction}
\label{sec:introduction}

Inverse problems  are concerned with the recovery of an underlying unknown from data, represented by noisy measurements of a model solution. One way to approximate solutions to an inverse problem is to treat the unknown as a probabilistic distribution, referred to as the posterior, which is known as a statistical or Bayesian approach \cite{KS04,FS86,AMS10}.

 Since the development of inverse problems in the Bayesian setting, an important and fundamental question is how to develop informative priors. Extending the family of numerically applicable prior distributions is of  crucial importance as the posterior inherits properties of the prior, when the measurements do not fully determine the unknown.   In the beginning of the millennium   the  development of  priors that mimic,  in various ways,   the behaviour of discontinuous  unknowns in inverse problems,  was initiated as a part of this extension process.   We emphasize that we will only discuss below the priors for  inverse problems, as ordinary regression problems do not usually need  as strong  prior assumptions as the severely ill-posed inverse problems.

\subsection{Related work}

Extending the family of numerically applicable prior distributions  is
 of  crucial importance as the posterior inherits properties of the prior, when the measurements do not fully determine the unknown.   In the beginning of the millennium   the  development of  priors that mimic,  in various ways,   the behaviour of discontinuous  unknowns in inverse problems,  was initiated as a part of this extension process.   We emphasize that we will only discuss below the priors for  inverse problems, as ordinary regression problems do not usually need  as strong  prior assumptions as the severely ill-posed inverse problems.
 
  One of the first edge-preserving priors that was used in the context of parametric Bayesian inversion was the total variation (TV) prior. The motivation for the prior was naturally derived  from TV regularization in non-statistical inverse problems and the success of LASSO methods in regression problems.   The first systematic approach to TV priors was the  study by Lassas and Siltanen in 2004 \cite{LS04}, who showed that the TV prior was degenerate to mesh refinements. As a result the prior was viewed not a sensible choice to use in the Bayesian approach. 

As a remedy, Lassas et al.\ \cite{LSS09} formulated in 2009 the first nonparametric edge-preserving prior by introducing Besov space-valued random series expansions. Shortly after, a  regularization motivated approach was made by Helin  \cite{Helin09}.  Helin's hierarchical Gaussian prior is linked to approximations of  the Mumford-Shah (M-S) functional. The M-S functional allows discontinuities of the unknown over segments, and, more importantly, penalizes  the size of  the singular sets of the unknown.  Helin's hierarchical prior  enjoys numerical ease  due to use of hierarchical Gaussian distributions, but the prior is only approximatively  edge-preserving as the applied Ambrosio-Tortorelli approximation promotes continuous sample paths through the introduction of an additional sharpness parameter.  The benefit of these priors over TV, is that they remain discretization-invariant, which is an attractive property to have in numerical approximations.

 The Besov prior has seen substantial progress since its formulation. Notably the well-posedness of the Bayesian inverse problem in  Hölder spaces   has been shown  by Dashti et al.\cite{DHS12}  for regular enough Besov priors, where the regularity is in essence considered  in terms of the maximum value and decay speed  of the  expected  absolute values of  wavelet coefficients.   Further enhancements  towards  maximum a-posteriori inversion has been done in \cite{ABDH17}.

The prior distribution recipe of Lassas  of et al.\  of defining  random functions with the help of just  regular enough deterministic functions and independent random  coefficients,  which have distributions, whose track record is good in Bayesian inference,  has since been successfully adapted to other cases.  
Whereas Lassas et al.\ used wavelet expansions and coefficients with generalized Laplace distributions
(known as exponential power distributions \cite{BoxTiao62}), others have concentrated on unconditional Schauder basis in Banach spaces and 
heavy-tailed distributions, which produce prior distributions  that promote small number of large local variations.

Sullivan studied well-posedness of the Bayesian inverse problem in quasi-Banach spaces with $\alpha$-stable  coefficients \cite{TJS16}. Well-posedness for  stable prior distributions is a challenging problem since the prior is only  $p$-integrable instead of 
exponentially integrable as is the case in Stuart's classical approach \cite{AMS10}.  In Sullivan's method  the  key element for well-posedness is to restrict to  forward mappings that have  slowly growing lower bounds.

Hosseini \cite{BH17,HN17}  suggested several types of priors from  random series with a Schauder basis and  generalized Gamma distributed coefficients to pure jump Lévy processes, namely, compound Poisson processes and their generalizations, randomly truncated random series in Banach spaces.  Hosseini's study  \cite{BH17}  is the first systematic treatment of prior and posterior distributions on the important nonseparable function space $BV(D)$ of  functions of bounded variation on an open set $D\subset \mathbb R$. Among the results shown by Hosseini is  a significant technical detail  about the Radon property of distributions, of compound Poisson processes, on $BV(D)$. In \cite{BH17},   well-posedness for the heavy-tailed distributions follows   similar lines as in Sullivan's approach  \cite{TJS16}.

More recently, aspects of deep learning have also been applied to Bayesian inversion, such as in the context of deep Gaussian processes. The paper of Dunlop et al. \cite{DGST19} discuss how much depth is required from a deep Gaussian process  (DGP) prior to capture features of the unknown. Their results suggest the DGP prior does not need to be very deep while another contribution is that it offers flexibility in modelling. yet the main theory derived for these priors has been on ergodicity, as remaining theory for inversion holds in the same way as for Gaussian priors. 

Although truncations of the random series expansions provide natural parametric approximations of the corresponding random functions, these approximations can suffer from an  intrinsic preference of the magnitude of  jumps  at different location. An easy demonstration is given by considering Haar wavelets and their  random coefficients, which are not identically distributed. Therefore, the use of stochastic processes and random fields,  as suggested by Markkanen et al.\ \cite{MRHL15} and 
Hosseini \cite{BH17},  is attractive.  

Among other priors defined by stochastic processes and random fields, we 
mention hierarchical Mat\'ern type fields \cite{CIRL17}, which are approximated through 
stochastic partial differential equations.  

 Moreover, for consistent approximations of the posterior  in TV metric  or Hellinger distance, one typically needs  a stronger topology on the space of unknowns in order to obtain the needed uniform convergence (see Example 2 in \cite{HN17}).  This raises the question, do the approximations of the posterior converge in weaker topology on a more natural sample space?

  A common and popular choice of priors traditionally have been priors of a Gaussian form \cite{VIB98,KVZ11,RHL14}.  This is due to both attractive properties associated with the form, but also its applicability for computational purposes, such as in the context of partial differential equation (PDEs) based inverse problems \cite{BGMS13,CIRL17,PMSG14,RHL14}. With many PDEs the physical properties of the unknown such as a diffusion coefficient omit a heterogeneous form, so modelling them through a Gaussian prior is a natural choice. Such PDEs include impedance tomography, Darcy flow and the Navier-Stokes equation. However there are scenarios in which the unknown of interest {does not omit a representation typical of a Gaussian prior}, but instead has certain discontinuities and edges.  As a consequence a Gaussian prior can result in a poor approximation.

\subsection{Our Contribution}
The purpose of this paper is to study a new class of non-Gaussian priors which are motivated from $\alpha$-stable sheets \cite{CLL08,HL06,KSS13,TS94}. These priors are based on $\alpha$-stable random variables. They can be expressed through the general form
\begin{equation*}
 {X} \sim S_{\alpha}(\sigma,\beta,{\gamma}),
\end{equation*}
where $\alpha \in (0,2]$ denotes stability parameter, $\beta\in [-1,1]$ denotes the skewness parameter, ${\gamma} \in \mathbb{R}$ represents the shift parameter, and $\sigma \in \mathbb{R}$ which describes the location. These processes are of particular interest as they incorporate both smooth and rough features through various distributions, such as Gaussian $(\alpha=2)$ and Cauchy $(\alpha=1)$ \cite{CIRL17,RHL14,TJS16}. Both these processes have been used for Bayesian inversion where a comparison was conducted in the work of Markkanen et al.\ \cite{MRHL15}. Much of the current literature has focused on developing computational methodologies for non-Gaussian priors. In contrast to this we aim to build a theoretical understanding of $\alpha$-stable sheets, which are highlighted below as our main contributions.
\begin{itemize}
\item[(i)] We aim to ensure that both  well-definedness and well-posedness  of the inverse problem, when the prior is specified as a stable random field, is attained. This can be achieved through a number of assumptions which we place on the Radon-Nikodym derivative, which is the form the posterior distribution takes. Depending on the assumptions we verify well-posedness in weak topology and  in total variation metric. Our discussion extends the results of Hosseini et al.\ \cite{BH17,HN17} and Sullivan
\cite{TJS16} towards conditions that do not explicitly involve 
slowly growing bounds for the forward mapping. However, we emphasize that the cases in \cite{BH17, HN17, TJS16} are valid for stable random fields.

\item[(ii)] Under certain conditions and assumptions, we provide numerous convergences results. To study convergence we introduce different representations of the sheets, which includes a series expansion and in integral form. Specifically for this work we will focus on both $L^p$ convergence and convergence in a fractional Sobolev spaces $H^s_p$. This will be examined for different values the stability parameter $\alpha \in (0,2]$ can take. We analyze the sample path regularity of these processes to ensure convergence, as these processes omit discontinuities. 
\item[(iii)] A discretization of the $\alpha$-stable sheets is established, which is constructed through hypercubes and discretized random walks. The motivation of the particular discretization is taken from \cite{MRHL15}, where these sheets will act as a finite-dimensional approximation. This will lead onto providing various convergence results. However unlike the different results discussed above, we will solely concentrate on $L^p$ convergence.
\end{itemize}

\subsubsection{Outline.}
{The layout of this work is split in the following manner. In Section 2 we provide an overview of preliminary material and notation which is required for the rest of paper. This includes a discussion on Bayesian inverse problems and their well-posedness. This will lead onto Section 3 where we discuss $\alpha$-stable stable sheets  and the forms they take.  Then we carefully re-evaluate the existing results on sample spaces of  $\alpha$-stable random fields and update them to suit the needs of Bayesian inverse problems.    We extend these results to the case of the posterior in Section 4 where we describe  parametric approximations  of the sheets and provide their posterior convergence.  Finally in Section 5 we conclude with some final remarks while mentioning further areas of research.}
\section{Background material}
\subsection{Notation and preliminaries}

Let $(\Omega,\Sigma,P)$ represent a complete probability space, with sample space $\Omega$,  $\sigma$-algebra $\Sigma$ and probability measure ${P}:\Sigma \rightarrow [0,1]$. The set of all real-valued  random variables on $\Omega$ is denoted with $L^0(\Omega)$.  The conditional expectation $ \mathbb{E}[f|\Sigma_0]$  of a function $f \in L^1(\Omega,\Sigma,{P})$, given a $\sigma$-algebra $\Sigma_0 \subset \Sigma$ is a $\Sigma_0$-measurable function, where
\begin{equation*}
\int_{\mathcal{A}} f d{P} = \int_{\mathcal{A}} \mathbb{E}[f|\Sigma_0]d{P}, \text{ for all } \mathcal{A} \in \Sigma_0.
\end{equation*}

Let $F$ be a separable Banach space equipped with its Borel $\sigma$-algebra $\mathcal F$.
 We denote with $F'$ the dual space of $F$ i.e.\ the space of all continuous linear 
 forms $\lambda:F\rightarrow \mathbb R$. We denote the action of $\lambda $ on $u\in F$ with 
 duality  $\lambda(u)=\langle u, \lambda\rangle _{F,F'}$ between $F$ and $F'$ and equip
 the linear space  $F'$ with the strong topology  $\Vert \lambda\Vert_{F'} =
  \sup_{\Vert u\Vert \leq 1} \langle u,\lambda\rangle_{F,F'}$. 
 
We say that $U:\Omega\rightarrow F$ is $F$-valued random variable, if  $U^{-1}(B)\in \Sigma$ for 
all Borel sets $B\in\mathcal F$.  
 \begin{remark}
 \label{remark:pp}
For a separable Banach space $F$,    weakly measurable mappings $U:\Omega\rightarrow B$ are measurable,  since the Borel $\sigma$-algebra $\mathcal F$ is generated by cylinder sets of the form 
$$
\{ u\in F : (\langle u,\lambda_1\rangle, \dots, \langle u,\lambda_m\rangle)\in B\}, 
$$
where $m\in \mathbb N$, $B\subset \mathcal B(\mathbb R^m)$,  and $\lambda_k\in F'$. For separable duals, we may choose
$\{\lambda_k\}_{k=1}^\infty$  to be any  countable dense set  of the dual space of $F$ (see Theorem 6.8.9 in \cite{VIB07}).  
 \end{remark}

The distribution $P\circ U^{-1}$   of  $U  $ on $(F,\mathcal F)$  is denoted with $\mu_U$.  A conditional distribution of $U$ given another Banach-space valued random 
variable $Y:\Omega \rightarrow G$,  is defined as $\mu_{U}^{Y(\omega)} (B):= \mathbb E [1_B(X)| Y^{-1}(\mathcal G)](\omega)$ for all Borel sets $B$ of $F$, where $\mathcal G$ is the Borel $\sigma$-algebra of the separable Banach space $G$.  The notation $\mu_U^{Y(\omega)}$ 
emphasizes the fact that conditional expectation given $\sigma$-algebra generated by another 
random variable $Y$ can be expressed as a function of $Y$.  In separable Banach spaces, 
conditional distributions have  regular  version in the sense that $y\mapsto \mu_U^y(B) $ is measurable from $G$ to $[0,1]$ for every $B\in \mathcal F$ and 
$\mu_U^y$ is a probability measure on $(F,\mathcal F)$ for every  $y\in G$.  We remark, that 
only the joint distribution of $U$ and $Y$ is needed to  determine the distributions 
$\mu_U^y$ up to a $\mu_Y$-null set (see Theorem 2.4 in \cite{SL12i}). 

Two separable  Banach space-valued random variables $U:\Omega\rightarrow F$ and $\eta:\Omega\rightarrow \widetilde G$  are called independent, if    $1_B(U)$ and $1_{D}(\eta)$ are independent for any Borel sets $B\subset F $ and $D\subset \widetilde G $. For independent Banach-space valued random variables $U,\eta$ and continuous  
function  $K:F\times \widetilde G\rightarrow G$, the conditional distribution of 
$K(U,\eta)$ given $U=u$ is the distribution of  $K(u,\eta)$ (see Lemma 3.2 in \cite{SL12i}).

A characteristic function of a probability measure  $\mu_U$ on  $(F,\mathcal F)$ is 
a mapping $\phi: F'\rightarrow  \mathbb C$ given by 
\begin{equation*}
 \phi(\lambda) = \int \exp\left( i \langle u,\lambda\rangle _{F,F'} \right) \mu_U(d u).
\end{equation*}

For further details on measures and regular conditional distributions on Banach spaces,  we refer to 
\cite{VIB98,VIB07}.

   An $F$-valued random variable  $U$  has a stable distribution if, for any positive constants $a$ and $b$, there exists  positive constants $c$ and $d$  such that
\begin{equation}\label{eq:stable_def}
aU_1+bU_2 = cU + d,
\end{equation}
in distribution, where $U_1$ and $U_2$ are independent copies of $U$.  Real-valued stable random variables $U$ have characteristic functions
\begin{equation*}
\phi(t)=\mathbb E[ \exp\left( i t U \right) ] =
\begin{cases} 
\exp\left( -\sigma^\alpha |t|^\alpha \left(1- i\beta \operatorname{sgn}(t) \right)\tan\left( \frac{\pi \alpha}{2} \right) + i \gamma t\ \right) \text{ when } \alpha\not= 1
 \\
 \exp\left( -\sigma^\alpha |t|^\alpha \left( 1+i\beta \frac{2}{\pi} \operatorname{sgn}(t) \ln (|t|)\right) + i \gamma t\ \right) \text{ when } \alpha\not= 1,
\end{cases}
\end{equation*}
where  parameters $\alpha \in (0,2]$,   $\beta \in [-1,1]$, $\sigma \in \mathbb{R}$ and $\gamma \in \mathbb{R}$. Parameters $\alpha$ and $\beta$ are referred as the stability and
skewness parameter,  respectively. The distribution of a real-valued stable random variable 
is denoted with 
\begin{equation*}
 U \sim S_{\alpha}(\sigma,\beta,\gamma).
\end{equation*}
The  distribution of real-valued stable random variable  is called symmetric if $\beta=\gamma=0$.  
When $U$ is $F$-valued stable random variable, we 
call  it symmetric,  if the composition of $U$ with any
continuous linear form $\lambda\in F'$ is 
symmetric. Especially, we can identify the distribution of  symmetric $U$ 
through its characteristic function
$$
\mathbb E \left[ \exp (i \langle U,\lambda\rangle _{F,F'}) \right]  = 
\exp\left( -\sigma_\lambda ^\alpha\right).
$$
{For the purpose of this work we will focus on a special type of symmetric stable random variables, $\alpha$-stable sheets, which 
we  will use as heavy-tailed priors in Bayesian inverse problems for $L^p$-valued unknowns. This will be introduced and discussed in detail in Section \ref{sec:sheets}}.

\subsection{Bayesian inverse problems}
\label{ssec:BIP}

We will first recall the basics of Bayesian inverse problems in infinite-dimensional spaces. Let $F$, $G$ and  $\widetilde G$ denote separable  Banach spaces equipped with their Borel $\sigma$-algebras $\mathcal  F, \mathcal G$ and $\widetilde {\mathcal  G} $, respectively. An inverse problem is concerned with the recovery of some quantity of interest $U \in F$ from data  $Y \in G$, where we will consider noisy models of the type
\begin{equation}
\label{eq:inv}
Y = K(U,\eta),
\end{equation}
such that $\eta$ is $\widetilde G$-valued random noise and $K:F\times \widetilde G \rightarrow G$ is a  continuous mapping. We  take the Bayesian approach and  model $U$ as an $F$-valued random variable with prior distribution $\mu$ on  $\mathcal F$.  A common setup is to take 
$K(u,y)= L(u)+y$, where $ L: F\rightarrow G$ is a continuous mapping.

 When distributions of $U$ and $Y$ have  probability density functions   (e.g.\  when $F$ and $G$
  are finite-dimensional), we  write the familiar Bayes' formula 
for the posterior distribution of $U$ given $Y$ 
as

\begin{equation}
\label{eqn:Bayesian}
\begin{split} 
f_U(u|Y=y) =& \frac{f_Y(y|U=u)f_{U}(u)}{f_Y(y)}  \\
 \propto & f_Y(y|U=u)f_{U}(u), 
\end{split}
\end{equation}
where  $f_Y(y|U=u)$ is the likelihood and  $f_{U}(u)$ is  the prior probability density of  $U$, which   represents our initial beliefs about the unknown $U$.  We will denote with 
$\mu$ the prior distribution of $U$ and with $\mu^y$ the posterior distribution of $U$ on $F$. The non-existence of Lebesgue's measure in infinite-dimensional setting prohibits us from using \eqref{eqn:Bayesian} in the infinite-dimensional case.   Instead, one uses  different measures in place of the Lebesgue's measure. Let us recall the basics of infinite-dimensional Bayesian inference by 
considering some formal candidates for  replacements  of  Lebesgue's measure in \eqref{eqn:Bayesian}, which lead to the well-known representation of the 
posterior distributions  (see \cite{AMS10}).

Avoiding the use of the  posterior density in  \eqref{eqn:Bayesian} is straightforward,  where we just take
 integrals and consider the posterior distribution instead of posterior density. Similarly,  the  prior density can be avoided by integrating with respect to prior  distribution $\mu(du)$ instead of   $f_U(u) du$, where $du$ is the  Lebesgue's measure.    From 
 \eqref{eqn:Bayesian}, we formally derive the  posterior distribution 
 $$
 \mu^y(B)= \frac{ \int_B f_Y(y|U=u) \mu(du)}{\int_F f_Y(y|U=u) \mu(du)}. 
 $$
 Expressing the integral  with  respect to prior measure $ \mu(du) $  and considering  posterior distributions instead of  posterior  densities  handles   two out of three  problematic densities.

The critical part of  generalizing \eqref{eqn:Bayesian} to infinite dimensions  is finding a generalization  for the  likelihood function $f_Y(y|U=u)$, which has turned  out to be nontrivial and sometimes even  impossible (see Remark 4 and 5,  together with  a simple Gaussian counterexample,  Remark 9, in  \cite{SL12i}).

If the conditional probability distribution of $Y$ given $U=u$, which we denote with 
$\mu_Y^u$,  has Radon-Nikodym densities  
$
\frac{d\mu_Y^u }{d\nu}(y)
$  
with respect to  a  common $\sigma$-finite measure $\nu$ on $G$ for ($\mu$-almost) all $u\in  F$, the  Bayes' formula continuous to hold in the sense that the posterior distribution  has Radon-Nikodym density
 (see \ \cite{kallianpur1968,SL12i})
\begin{equation}\label{eqn:inv_inf}
\frac{d\mu^y}{d\mu} (u) \propto  \frac{d\mu_{Y}^u}{d\nu}(y) =: \exp(-\Phi(u,y)),
\end{equation} 
 with respect to the prior distribution $\mu$. In \eqref{eqn:inv_inf}, the mapping 
 $\Phi: F\times G \rightarrow \overline{ \mathbb R}$ is an extended real-valued mapping, which 
 can be chosen to be  jointly $\mu\otimes \nu$- measurable on $F\times G$  (see Theorem 2 in  \cite{kallianpur1968}).
 
\begin{defn}\label{def:NDLL}
We say that  the inverse problem \eqref{eq:inv} is dominated, if  there exists a   $\sigma$-finite measure $\nu$  so that the Radon-Nikodym densities 
\begin{equation*} 
\frac{d\mu_Y^u }{d\nu}(y) =\exp(-\Phi(u,y)),
\end{equation*}
define  a jointly measurable mapping   $\Phi: F\times G \rightarrow \overline{ \mathbb R}$. In this work, we call $\Phi$ negative dominated loglikelihood (NDLL).
\end{defn}

In this work  we will focus on the basic case, where $G$ is finite-dimensional and the generalized likelihood
$\exp(-\Phi(u,y))$ is bounded.  For example,  when the  $U$ and $\varepsilon$ are statistically independent, 
we take $\Phi(u,y)= -\log f_{\varepsilon+L(u)}( y)$ for the observation $Y=L(U)+\varepsilon$.

\subsection{Well-posedness of Bayesian inverse problems}

The well-posedness of the posterior distribution, established first by Stuart \cite{AMS10} in the Gaussian nonlinear case,  means essentially that
the posterior distribution  $\mu^y $  depends  continuously on $y$ with respect to suitable topology on the space of probability distributions.   Well-definedness of the posterior is well-known for almost every observation (see \ \cite{kallianpur1968,SL12i} and references therein), but the characterization of sets of well-definedness has gained more popularity during the last decade.  We  recall sufficient conditions for  well-posedness of  the posterior distribution of  stable random sheets  $U$  in dominated inverse problems, with respect to weak topology and the total variation metric.  We follow the general scheme introduced in \cite{AMS10} and refined in \cite{TJS16}. 

Let the posterior distribution be of the form 
\begin{equation*}
\mu^y \propto    \exp \left( -\Phi  (x,y)\right)  \mu(dx),
\end{equation*}
where  $\Phi$  is NDLL as in Definition \ref{def:NDLL} and  the prior $\mu$ is the distribution of  the stable random sheet $U$.

In the next definition,  Conditions {WD1} and {WD2} are connected to well-definedness in the fully infinite-dimensional case.  Condition WP1 and WP2 are connected to well-posedness.   Condition {WP2} is connected to well-posedness in  weak topology  and  Condition {WP3} strengthens well-posedness so that it holds also  in  total variation metric.  Conditions
WD1 and WP1  intentionally leave  the finer sufficient properties of the NDLL undetailed, since our intention is to use a pure skeleton of high-impact assumptions.  The Condition PC1, which we will use later,  is connected to posterior convergence of the Bayesian inverse problem.

\begin{defn}
 We define the following conditions for a  NDLL   $\Phi:F\times G\rightarrow \mathbb R$ and 
 a distribution $\mu$ on $F$. 
\begin{enumerate}
\item [] \emph{\textbf{WD1.}}  There exists a  set $D_\mu \subset G$ such that the function $\exp(-\Phi(u,y))$ is $\mu$-integrable for every  $y\in D_\mu$.
\item [] \emph{\textbf{WD2.}}  The NDLL $\Phi$ is bounded on bounded subsets of $F\times G$.
\item[]  \emph{\textbf{WP1.}}  There exists a  set  $D_\mu \subset G$  such that the  function $y\mapsto  \int \Phi(u,y)\mu(du) $ is continuous on $D_\mu$ (in the relative topology). 

\item[] \emph{\textbf{WP2.}} The function $y\mapsto \Phi(u,y)$ is continuous on $G$ for every 
$u\in F$.
\item[]\emph{\textbf{WP3.}} The functions $y\mapsto \Phi(u,y)$  on $G$  are uniformly continuous  with respect to  $u\in B$ for any bounded subset $B$ of $F$. 

\item[]\emph{\textbf{PC1.}} The functions $u\mapsto \Phi(u,y)$  on $F$  are  continuous   for 
any $y\in G$.
\end{enumerate}
\end{defn}
In WP1, we  can replace convergence in  relative topology with the original topology whenever the set  $D_\mu$  is open. 

\begin{defn}
We say that the posterior distribution $\mu^y\propto  \exp(-\Phi(u,y)) \mu(du) $  is well-defined  on the set $D_{\mu}$,  if  the normalizing constant $Z_y= \int_F \exp (-\Phi(u,y)) \mu(du)$ is positive and finite for every  $y\in D_\mu$. 
 \end{defn}

 We recall the well-known fact that the posterior distributions are always almost surely well-defined. However,  the set $D_\mu$ is  not explicitly characterized. 
 The second part of the claim, which originates from Stuart \cite{AMS10}, is used when we wish to evaluate well-definedness on a given set $D_\mu$.  
\begin{thm} \label{thm:WD}
 Let $\Phi$ be a NDLL. 
\begin{itemize}
\item[(i)] There exists a set $D_\mu$ of full $\mu_Y$-measure such that 
the  posterior distribution  $\mu^y \propto\exp (-\Phi(u,y)) \mu(du)$ is  well-defined
 on the set $D_\mu$. 

\item[(ii) ] If  NDLL $\Phi$ and a prior distribution $\mu$  satisfy  Conditions  WD1 and WD2 
for a given set $D_\mu\subset G$,  then the posterior distribution  $\mu^y \propto\exp (-\Phi(u,y)) \mu(du)$ is  well-defined  on the set $D_\mu$. 
 \end{itemize} 
 \end{thm}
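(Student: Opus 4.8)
The plan is to handle the two parts separately: part (i) rests on a Tonelli-type identification of the normalizing constant $Z_y$ with a Radon--Nikodym density, while part (ii) combines the two conditions WD1 and WD2 to furnish, respectively, the upper and the lower bound on $Z_y$.

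For part (i), I would begin from the defining relation $\exp(-\Phi(u,y)) = \frac{d\mu_Y^u}{d\nu}(y)$, the conditional density of $Y$ given $U=u$. Using that the marginal of $Y$ is the average of the conditionals over the prior, $\mu_Y(B) = \int_F \mu_Y^u(B)\,\mu(du)$, I would write for any Borel $B\subset G$
\[
\mu_Y(B) = \int_F \int_B \exp(-\Phi(u,y))\,\nu(dy)\,\mu(du).
\]
Since the integrand is nonnegative and jointly measurable --- the latter being precisely the content of the NDLL definition --- Tonelli's theorem permits interchanging the order of integration, giving $\mu_Y(B) = \int_B Z_y\,\nu(dy)$. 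Hence $Z_y = \frac{d\mu_Y}{d\nu}(y)$ is a version of the density of the probability measure $\mu_Y$ with respect to $\nu$. From this identification both finiteness and positivity follow $\mu_Y$-almost everywhere: the set $\{y:Z_y=\infty\}$ is $\nu$-null, because a density of a finite measure is finite $\nu$-a.e., and so it is $\mu_Y$-null as $\mu_Y\ll\nu$; while $N=\{y:Z_y=0\}$ satisfies $\mu_Y(N)=\int_N Z_y\,\nu(dy)=0$. Taking $D_\mu=\{y:0<Z_y<\infty\}$ then yields $\mu_Y(D_\mu)=1$, which is the assertion of (i).

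For part (ii), I would fix $y\in D_\mu$. Condition WD1 directly gives $Z_y=\int_F \exp(-\Phi(u,y))\,\mu(du)<\infty$, so only positivity must be established. Because $\mu$ is a probability measure on $F$ and $F=\bigcup_n B_n$ with $B_n$ the closed ball of radius $n$, countable additivity produces a bounded ball $B$ with $\mu(B)>0$. The set $B\times\{y\}$ is bounded in $F\times G$, so WD2 supplies a constant $M$ with $\Phi(u,y)\le M$ for all $u\in B$, whence
\[
Z_y \ge \int_B \exp(-\Phi(u,y))\,\mu(du) \ge e^{-M}\,\mu(B) > 0.
\]
As $y\in D_\mu$ was arbitrary, the posterior is well-defined on $D_\mu$.

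The genuinely delicate step is the Tonelli interchange and the identification $Z_y=d\mu_Y/d\nu$ in part (i): everything there hinges on the joint measurability built into the NDLL and on $\mu_Y\ll\nu$, so that $\nu$-null exceptional sets transfer into $\mu_Y$-null exceptional sets. Part (ii) is comparatively routine; the only point deserving care is that positivity is extracted from WD2 purely via a lower bound on a bounded set of positive prior mass, and that such a set exists for any probability measure on a normed space exhausted by balls, so no separability of $F$ is actually needed for this particular statement.
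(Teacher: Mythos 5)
Your proof is correct, and it is worth comparing the two parts separately. For part (i) you take essentially the paper's route: the paper also integrates the Radon--Nikodym identity $\exp(-\Phi(u,y))=\frac{d\mu_Y^u}{d\nu}(y)$ against $\nu$ over $G$, implicitly uses Tonelli exactly as you do, and deduces finiteness of $Z_y$ $\nu$-a.e.\ (hence $\mu_Y$-a.e.) from finiteness of $\mu_Y$. In fact you are more careful than the paper on one point: the paper chooses $D_\mu=\{y: Z_y<\infty\}$ and never addresses positivity, even though its own definition of well-definedness requires $Z_y>0$; your explicit removal of the set $N=\{y:Z_y=0\}$ via $\mu_Y(N)=\int_N Z_y\,\nu(dy)=0$ closes that small gap. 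For part (ii) your route genuinely differs in the source of the lower bound: the paper invokes the Radon property of probability measures on separable Banach spaces to produce a \emph{compact} set $K$ with $\mu(K^c)<\tfrac12$ and then applies WD2 on $K\times\{y\}$, whereas you exhaust $F$ by closed balls and use continuity from below to find a bounded ball of positive prior mass. Since Condition WD2 demands boundedness of $\Phi$ only on \emph{bounded} sets, compactness is superfluous, so your argument is more elementary and, as you observe, dispenses with separability (and Radon-ness) of $F$ for this particular statement; both arguments then conclude with the same estimate $Z_y\ge e^{-M}\mu(B)>0$, and finiteness on $D_\mu$ comes from WD1 in both cases.
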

\begin{proof}
For the claim (i),  we write norming constants with the help of 
the Radon-Nikodym density 
\begin{equation*}
 \int _ F  \exp (-\Phi(u,y))  \mu(du)  = \int _ F  \frac{d\mu_Y^u }{d\nu}(y) \mu(du)
\end{equation*}
and integrate with respect to $\nu$ over $G$. Since $\mu_Y$ is a finite measure, the norming 
constants are finite $\nu$-almost everywhere. We choose $\{y: \int _ F  \exp (-\Phi(u,y))  \mu(du)<\infty\}$ to be $D_\mu$. By replacing $G$ with $D_\mu$ in the integral, we see that 
the claim holds  also $\mu_Y$-almost surely.  

We now show  (ii). Since all probability measures on separable Banach spaces are Radon, there 
exists a compact set $K\subset F$ such that $\mu(K^C)< \frac{1}{2}$. 
By Condition WD2, the NDLL  $\Phi$ is bounded on $K\times\{y\}$ and there is a 
constant $C>0$ such that $-C \leq \Phi(u,y)   \leq  C$ on $K\times\{y\}$. Therefore,
 the normalizing constant has a lower bound
\begin{equation*}
\int _ F  \exp (-\Phi(u,y))  \mu(du) \geq \int _K \exp (- C) \mu(du) \geq \exp(-C)/2,
\end{equation*}
for every  $y\in G$. By Condition WD1, the normalizing constant is also bounded for every $y\in D_\mu$.
\end{proof}

We will now turn to the well-posedness of  posterior distributions. We start with weak topology of measures, which does not require 
integrability of moments from the prior distribution. This is very attractive for stable distributions, which are heavy-tailed. 
\begin{defn} 
We say that  the posterior distribution $\mu^y \propto \exp(-\Phi(u,y)) \mu(du)$  is 
well-posed on the set $D_\mu\subset G$  in  weak topology,  if
$\mu^y$ is well-defined on $D_\mu$ and,  for every bounded continuous function 
$f:F\rightarrow \mathbb R$,  the equation 
\begin{equation*}
\lim_{k\rightarrow \infty} \mu^{y_k}(f)= \mu^{y}(f),
\end{equation*}
holds  whenever  $\lim _{k\rightarrow \infty} y_k =y$,  where $y_k,y\in D_\mu$, $k\in\mathbb N$.  
\end{defn}

 For well-posedness in the weak topology, it is enough to show  continuity of the norming constants, if $\Phi$ is suitably continuous. 
 \begin{thm}
Let  a  NDLL $\Phi$ and  a prior  distribution $\mu$ satisfy  Conditions  WD1, WD2, WP1, and WP2. Then the  posterior distribution $\mu_y\sim \exp(-\Phi(u,y))\mu(du)$ is well-posed on the set  $D_\mu$ in weak topology.
\end{thm}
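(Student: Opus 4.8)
The plan is to follow the reduction announced just before the statement: reduce weak well-posedness to the continuity of the normalizing constant $y\mapsto Z_y$, and then establish that continuity from the continuity hypotheses on $\Phi$. First, Conditions WD1 and WD2 place us in the setting of Theorem~\ref{thm:WD}(ii), so the posterior is well-defined on $D_\mu$; in particular the normalizing constant $Z_y=\int_F \exp(-\Phi(u,y))\,\mu(du)$ satisfies $0<Z_y<\infty$ for every $y\in D_\mu$. Writing $\mu^{y}(f)=Z_y^{-1}\int_F f(u)\exp(-\Phi(u,y))\,\mu(du)$ for a bounded continuous $f:F\to\mathbb R$ and fixing a sequence $y_k\to y$ in $D_\mu$, the claim $\mu^{y_k}(f)\to\mu^{y}(f)$ will follow once I show (a) $Z_{y_k}\to Z_y$ and (b) $\int_F f\exp(-\Phi(\cdot,y_k))\,\mu(du)\to\int_F f\exp(-\Phi(\cdot,y))\,\mu(du)$, since $Z_y>0$ makes the quotient continuous.

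The first ingredient is pointwise convergence of the integrands: by Condition WP2 the map $y\mapsto\Phi(u,y)$ is continuous for each fixed $u$, so $\exp(-\Phi(u,y_k))\to\exp(-\Phi(u,y))$ for every $u$. Fatou's lemma applied to these nonnegative functions already yields the lower bound $\liminf_k Z_{y_k}\ge Z_y$. The substance of the proof is the matching upper bound, i.e.\ continuity of the norming constant, and this is where Conditions WD2 and WP1 enter. Using that every probability measure on a separable Banach space is Radon (as in the proof of Theorem~\ref{thm:WD}), I would fix $\varepsilon>0$, choose a compact---hence bounded---set $K\subset F$ carrying most of the mass of $\mu$, and split $Z_{y_k}$ into an integral over $K$ and a tail over $K^c$. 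On $K$, Condition WD2 bounds $\Phi$ uniformly over $K\times B$ for any bounded $B\subset G$ containing the convergent sequence $(y_k)$ and its limit, so $\exp(-\Phi(\cdot,y_k))$ is uniformly bounded there and dominated convergence gives $\int_K \exp(-\Phi(\cdot,y_k))\,\mu(du)\to\int_K\exp(-\Phi(\cdot,y))\,\mu(du)$. The tail I would control through Condition WP1: continuity of $y\mapsto\int_F\Phi(u,y)\,\mu(du)$ keeps the $\Phi$-means from concentrating as $k\to\infty$, which I would convert into uniform smallness of $\sup_k\int_{K^c}\exp(-\Phi(\cdot,y_k))\,\mu(du)$ (a uniform-integrability statement). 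Combining the two pieces gives $\limsup_k Z_{y_k}\le Z_y$, hence $Z_{y_k}\to Z_y$.

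With continuity of the norming constant in hand, part (b) follows quickly. Since $\exp(-\Phi(\cdot,y_k))\to\exp(-\Phi(\cdot,y))$ pointwise, these densities are nonnegative, and their integrals converge to the finite limit $Z_y$, Scheff\'e's lemma upgrades the convergence to $L^1(\mu)$, namely $\int_F\lvert\exp(-\Phi(\cdot,y_k))-\exp(-\Phi(\cdot,y))\rvert\,\mu(du)\to0$. Boundedness of $f$ then gives (b) at once, since the difference of numerators is bounded by $\|f\|_\infty$ times this $L^1$ distance. Dividing by $Z_{y_k}\to Z_y>0$ yields $\mu^{y_k}(f)\to\mu^{y}(f)$ for every bounded continuous $f$, which is precisely weak well-posedness on $D_\mu$.

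The hard part will be the tail step in the continuity of the norming constant. Because the prior is a heavy-tailed stable distribution, the densities $\exp(-\Phi(\cdot,y_k))$ need not be uniformly bounded over all of $F$, so the naive dominated-convergence argument applies only on the compact core $K$; establishing the uniform-in-$k$ smallness of the tail $\int_{K^c}\exp(-\Phi(\cdot,y_k))\,\mu(du)$ is exactly where Condition WP1---and, in the basic case of this paper, boundedness of the generalized likelihood $\exp(-\Phi)$---is indispensable. Once the norming constants are shown to converge, everything else is routine measure theory.
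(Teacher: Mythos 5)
Your overall architecture (well-definedness from WD1--WD2, continuity of the norming constants, Scheff\'e, then division by $Z_{y_k}\to Z_y>0$) is sensible, and the last two steps are correct as stated. The genuine gap is exactly the step you flag as ``the hard part'': converting Condition WP1 into uniform smallness of $\sup_k\int_{K^c}\exp(-\Phi(u,y_k))\,\mu(du)$. This conversion is not merely unproved in your proposal; it is false as an implication from the conditions your argument uses. Read literally, WP1 controls the means of $\Phi$, whereas the norming constants are governed by the \emph{negative excursions} of $\Phi$: a set of tiny $\mu$-measure on which $\Phi$ is extremely negative contributes negligibly to $\int\Phi\,d\mu$ but can blow up $\int\exp(-\Phi)\,d\mu$. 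Concretely, take $F=G=\mathbb R$, $\mu$ the standard Gaussian, $g$ a fixed continuous bump of height $1$ supported in $[0,1]$, and set $\Phi(u,y)=-2y^{-2}g(u-1/|y|)$ for $y\neq 0$, $\Phi(u,0)=0$. Then WD1, WD2, WP2 and the literal WP1 all hold (indeed $\bigl|\int\Phi(u,y)\mu(du)\bigr|\leq 2y^{-2}e^{-1/(2y^2)}\to 0$), yet $Z_y\gtrsim e^{1/(2y^2)-1/|y|}\to\infty$ as $y\to 0$, and testing against a bump function supported near the origin shows $\mu^{y}$ does \emph{not} converge weakly to $\mu^{0}=\mu$. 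So no argument built only from WD1, WD2, WP2 and the literal WP1 (which is all your proof invokes) can close the tail step; your dominated-convergence argument on the compact core is fine, but the tail is genuinely out of reach.

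The paper never attempts your derivation, because it reads WP1 differently: as continuity of the norming constants $y\mapsto Z_y$ themselves. This is how WP1 is announced just before the statement (``it is enough to show continuity of the norming constants'') and how it is invoked in the proof of Theorem~\ref{thm:TV} (``the normalizing constants converge by Condition WP1''); the wording of WP1 in the definition is best regarded as a slip. Under that reading, the paper's proof is short: WP1 and WP2 give pointwise convergence of the posterior densities $Z_{y_k}^{-1}\exp(-\Phi(u,y_k))\to Z_y^{-1}\exp(-\Phi(u,y))$, Fatou's lemma over an arbitrary open set $U\subset F$ gives $\liminf_k\mu^{y_k}(U)\geq\mu^{y}(U)$, and the portmanteau criterion yields weak convergence. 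Under the same reading your proposal simplifies drastically and becomes correct: item (a) is the hypothesis, the compact-core/tail decomposition disappears, and your Scheff\'e step then gives $L^1(\mu)$-convergence of the densities, hence convergence of $\mu^{y_k}$ to $\mu^{y}$ in total variation --- which is strictly stronger than the weak well-posedness claimed (and would even make Condition WP3 in Theorem~\ref{thm:TV} unnecessary). So the salvageable core of your route is genuinely different from, and sharper than, the paper's Fatou/portmanteau argument; but as written, resting on the literal reading of WP1 and the unjustified uniform-integrability claim, the proposal does not constitute a proof.
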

\begin{proof}
 
By Condition WP1 and  WP2 
\begin{equation*}
 Z_{y} \exp(-\Phi(u,y)) =  \lim_{k\rightarrow \infty} Z_{y_k} \exp(-\Phi(u,y_k)) = 
  \operatorname{liminf}_{k\rightarrow \infty} Z_{y_k} \exp(-\Phi(u,y_k)),
\end{equation*}
which we integrate over an  open set $U\subset F$. By Fatou's lemma
\begin{equation*}
 \int_U   Z_{y} \exp(-\Phi(u,y)) \mu(du)   \geq \operatorname{liminf}_{k\rightarrow \infty}  \int_U   \frac{1}{Z_{y_k}}\exp(-\Phi(u,y_k)) \mu(du).
\end{equation*}
Since the open set $U$ can be chosen freely, we arrive at a well-known equivalent criteria for weak convergence of distributions.
\end{proof}

Next, we recall a stronger mode of convergence from
\cite{DHS12,BH17,HN17,AMS10,TJS16}.
\begin{defn} 
We say that  the posterior distribution $\mu^y\propto \exp(-\Phi(u,y)) \mu(du)$ is well-posed on  the set $D_\mu\subset G$ in  total variation metric,  if
$\mu^y$ is well-defined on $D_\mu$ and 
\begin{equation*}
\lim_{k\rightarrow \infty}  \sup_{A\in \mathcal B(F)} |\mu^{y_k}(A) -  \mu^{y}(A)|,
\end{equation*}
 whenever  $\lim _{k\rightarrow \infty} y_k =y$,  where $y_k,y\in D_\mu$, $k\in\mathbb N$.  
\end{defn}

\begin{remark}\label{rem:tight}
For a  Banach space $F$, the uniform tightness of the family of distributions $\mu_k$ on $F$ is equivalent to  the condition that for every $\epsilon>0$ and every $r>0$, there exists a finite number of open 
balls $B_{r,\epsilon}$ of $F$ such that 
\begin{equation*}
\mu_k\left (F\backslash \cup B_{r,\epsilon}\right)< \epsilon,
\end{equation*} 
for every $k$ (See Remark 2.3.1 in \cite{VIB18}).
\end{remark}

Next, we study well-posedness of the posterior distribution  in total variation metric. Again, the convergence of norming constants is imperative. The milder assumptions than in 
\cite{DHS12,BH17,HN17,AMS10,TJS16} are achieved by relaxing the  posterior Lipschitz continuity to ordinary continuity.
\begin{thm}\label{thm:TV}
Let  a   NDLL $\Phi$ and  a prior distribution $\mu$  satisfy Conditions WD1, WD2, WP1, and  WP3. 
Then  the  posterior distribution $\mu^y \propto \exp (-\Phi(u,y)) \mu(du)$ is
well-posed on the open set $D_\mu$   in total variation metric.
\end{thm}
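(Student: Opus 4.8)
The plan is to reduce well-posedness in total variation to an $L^1$-convergence of posterior densities, and then to exploit the fact that — although the stable prior is only $p$-integrable and not exponentially integrable — convergence of the \emph{normalizing constants} together with pointwise convergence of the unnormalized densities already forces $L^1$-convergence, via Scheff\'e's lemma. This is precisely the device that lets us avoid a fixed integrable dominating function, which the heavy tails of $\mu$ would rule out.

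First, since $D_\mu$ is open and $y_k\to y\in D_\mu$, I may assume all $y_k$ lie in a fixed bounded neighbourhood $N\subset D_\mu$ of $y$. Both posteriors are absolutely continuous with respect to $\mu$, with densities $\rho_k(u)=Z_{y_k}^{-1}\exp(-\Phi(u,y_k))$ and $\rho(u)=Z_y^{-1}\exp(-\Phi(u,y))$, so the total variation distance equals $\tfrac12\int_F|\rho_k-\rho|\,\mu(du)$ and it suffices to prove $\int_F|\rho_k-\rho|\,\mu(du)\to0$. Writing $h_k(u)=\exp(-\Phi(u,y_k))$, $h(u)=\exp(-\Phi(u,y))$ and inserting the intermediate term $Z_{y_k}^{-1}h$, the triangle inequality gives
\begin{equation*}
\int_F|\rho_k-\rho|\,\mu(du)\;\le\;\frac{1}{Z_{y_k}}\int_F|h_k-h|\,\mu(du)\;+\;\frac{|Z_{y_k}-Z_y|}{Z_{y_k}}.
\end{equation*}

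Next I would control the denominators and the two numerators. Arguing exactly as in Theorem \ref{thm:WD}(ii), the Radon property of $\mu$ yields a compact (hence bounded) set $K$ with $\mu(K^C)<\tfrac12$, and by WD2 the NDLL $\Phi$ is bounded by a constant $C$ on $K\times\overline{N}$; this produces the uniform lower bound $Z_{y_k}\ge e^{-C}/2>0$ for all $k$, so the denominators are harmless. It then remains to show $Z_{y_k}\to Z_y$ and $\int_F|h_k-h|\,\mu(du)\to0$. The convergence $Z_{y_k}=\int_F h_k\,\mu(du)\to\int_F h\,\mu(du)=Z_y$ is the continuity of the norming constant encoded in WP1 (finiteness on $D_\mu$ being guaranteed by WD1), used already in the weak-topology theorem. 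For the pointwise behaviour, WP3 in particular forces $\Phi(u,y_k)\to\Phi(u,y)$ for each fixed $u$, since every singleton is bounded, whence $h_k\to h$ pointwise on $F$.

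Finally I would invoke Scheff\'e's lemma: the $h_k$ and $h$ are nonnegative, $h$ is $\mu$-integrable by WD1, $h_k\to h$ pointwise $\mu$-almost everywhere, and $\int_F h_k\,\mu(du)\to\int_F h\,\mu(du)$; therefore $\int_F|h_k-h|\,\mu(du)\to0$. Substituting this together with $Z_{y_k}\to Z_y$ into the displayed bound makes both terms vanish, giving $\int_F|\rho_k-\rho|\,\mu(du)\to0$ and hence well-posedness in total variation on $D_\mu$. The main obstacle is exactly what Scheff\'e circumvents: heavy tails preclude a single integrable envelope for the family $\{h_k\}$, so the passage from pointwise to $L^1$ convergence cannot go through dominated convergence; the decisive point is that WP3 supplies the pointwise (indeed locally uniform) convergence while WP1 supplies the matching convergence of the integrals, and it is their interplay — not any tail domination — that closes the argument. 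An equivalent route, closer to the spirit of Remark \ref{rem:tight}, would split the integral over $K$ and $K^C$, use WP3 and WD2 for uniform convergence on the bounded set $K$ and uniform tightness to absorb the tails; the Scheff\'e formulation simply keeps that tail estimate implicit in the convergence of the $Z_{y_k}$.
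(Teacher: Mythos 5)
Your proof is correct, but it follows a genuinely different route from the paper's. The paper argues via uniform tightness: it uses the Radon property of the limit measure $\exp(-\Phi(u,y))\mu(du)$ to produce a finite union of balls carrying most of its mass, invokes WP3 in full strength (uniform continuity of $y\mapsto\Phi(u,y)$ over bounded sets of $u$) to transfer this control to the measures $\exp(-\Phi(u,y_k))\mu(du)$, concludes that this family is uniformly tight, and then obtains the $L^1(\mu)$ convergence of the unnormalized densities by splitting into a compact set $K'$ --- where WP3 gives $\sup_{u\in K'}\left|\exp(\Phi(u,y)-\Phi(u,y_k))-1\right|\to 0$ --- and a complement of uniformly small mass. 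You replace this entire mechanism by Scheff\'e's lemma: pointwise convergence $h_k\to h$ together with convergence of the integrals $Z_{y_k}\to Z_y$ already forces $\int_F|h_k-h|\,\mu(du)\to 0$, since $|h_k-h|=h_k+h-2\min(h_k,h)$ and $\min(h_k,h)\le h$ is dominated by the integrable limit, so no tail estimate is needed at all. Your decomposition of $\int_F|\rho_k-\rho|\,\mu(du)$, the uniform lower bound on $Z_{y_k}$ via WD2 and the Radon property (exactly as in Theorem \ref{thm:WD}(ii)), and the use of WP1 for $Z_{y_k}\to Z_y$ (which is the same reading of WP1 that the paper itself employs in both well-posedness proofs) are all sound; the only cosmetic slip is that a bounded neighbourhood $N\subset D_\mu$ contains all but finitely many $y_k$ rather than all of them, which discarding finitely many terms fixes. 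Notably, your argument consumes strictly less than the stated hypotheses: WP3 enters only through singletons, i.e.\ through the pointwise continuity already expressed by WP2, so your proof in fact shows that WD1, WD2, WP1 and WP2 suffice for well-posedness in total variation, whereas the paper's argument genuinely uses the uniformity over bounded sets in WP3. What the paper's longer route buys is the uniform tightness of the posterior family itself, a quantitative compact-set statement of independent interest; what your route buys is brevity, the elimination of the tightness machinery, and a formally weaker hypothesis.
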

\begin{proof}
  Let  $y_k\rightarrow y$, where  all $y_k,y\in D_\mu$. Then
  all $y_k,y$ belong to a bounded set $B$. 
     
We  use the equivalent definition of uniform tightness in  Remark \ref{rem:tight}.   Let $\epsilon>0$ and $r>0$.  By tightness of $\exp(-\Phi(u,y))\mu(du)$,   there exists a finite number of balls $B_{r,\epsilon/2}^p$ such that 
$$
\int_{F\backslash\cup_p  B_{r,\epsilon/2}^p } \exp(-\Phi(u,y))\mu(du)<\epsilon/2. 
$$
By Conditions WP3, the NDLL $\Phi$ has an upper  bound 
\begin{equation}\label{eqn:WP_1}
\begin{split}
\Phi(u,y_k)= &  \Phi (u,y) + \left( \Phi (u,y_k)- \Phi(u,y)\right)  \\
\leq &\Phi(u,y)   + \sup_{u\in B'} \left|  \Phi (u,y_k)- \Phi(u,y) \right|\\
  \leq & \Phi(u,y) + R\varepsilon,
  \end{split}
\end{equation}
for all $k>N=N_{R,\epsilon,r}$ and  $u$  from a bounded subset $B'$ of $F$, which we take to be 
  the finite union $\cup_p B_{\epsilon/2,r}^p$.  We estimate 
\begin{equation*}
\begin{split}
\int _{\cup_p  B_{r,\epsilon}^p}  \exp(-\Phi(u,y_k)) \mu(dx) 
\geq &  \int _{\cup_p  B_{r,\epsilon}^p}  \exp(-\Phi(u,y) - R\epsilon) \mu(dx)\\
>&  (Z_y-\epsilon/2) \exp(-R\epsilon).
\end{split} \end{equation*}
We choose $R$ so that 
$$
 (Z_y -\epsilon/2) \exp(-R\epsilon ) \geq  Z_y-3\epsilon/4,
$$
that is, 
$$
R  \leq \frac{1}{\epsilon } \ln \left( \frac{Z_y-\epsilon/2}{Z_y- 3\epsilon/4}\right) .
$$
Then 
\begin{equation*}
\int_{F\backslash\cup_p  B_{r,\epsilon/2}^p } \exp(-\Phi(u,y_k))\mu(du) <
 3\epsilon/4 + Z_{y_k}-Z_y.
\end{equation*}
Since the normalizing constants converge by Condition WP1, we may choose $N'>N$ so that 
$|Z_{y_k}-Z_y|< \epsilon/4$ when $k>N'$. 

For $k\leq N'$,  there exists finite number of open balls 
$
B_{r,\epsilon,k}^{p_k }$ such that 
$$\int_ {F\backslash \cup_{p_k } B_{r,\epsilon,k}^{p_k}} \exp(-\Phi(u,y_k)) \mu(du) <\epsilon.$$  
The finite collection of balls $B_{r,\epsilon/2}^p$ together with  the  finite number of balls  $B_{r,\epsilon,k}^{p_k}$, where $k\leq N'$,  fulfills the required  condition in Remark 
\ref{rem:tight}.  Hence, $\Phi(u,y_k)\mu(du)$ are 
uniformly tight.  The uniformly  tight family of measures $ \Phi(u,y_k)\mu(du)$ is also bounded. 
Indeed, the converging sequence $y_k$ belongs to a bounded set $B$ and  by uniform tightness, there exists a compact set $K'=K'_\epsilon$ so that  
\begin{equation*}
\int_{K'\cup F\backslash K'} \exp(-\Phi(u,y_k)) \mu(du)   \leq 
\int   \sup_{(u,y)\in K'\times B}  \exp ( -\Phi(u,y)) \mu(du)+  \epsilon,
 \end{equation*}  
 where we  apply Condition WD2. 
 
 Finally, we verify the convergence of  $ \exp(-\Phi(u,y))\mu(du)$, which 
 follows directly from tightness and Equation \eqref{eqn:WP_1}. Indeed,
 \begin{equation*}
 \begin{split}
  &\int \left |  \exp(-\Phi(u,y_k)) - \exp (-\Phi(u,y)) \right|   \mu(du)  \\
 &\leq  \int _{K'}    \exp (-\Phi(u,y))  
 \left |  (\exp(\Phi(u,y) -\Phi(u,y_k) ) - 1 \right| \mu(u)  +  2  \epsilon 
  \\  & \leq Z_y \sup_{u\in K'}  \left| \exp(\Phi(u,y) -\Phi(u,y_k) ) - 1 \right|  + 2\epsilon, 
 \end{split}
 \end{equation*} 
 where we apply Condition WP3. 
\end{proof}

We do not discuss well-posedness in Hellinger topology, which 
is covered in \cite{BH17,HN17,TJS16}, since we have not been able to improve these results. The main obstacle is that  conditions characterized by moments arise more naturally for Hellinger distance.  

\section{$\alpha$-stable random measures and sheets} \label{sec:sheets}
In this section we review $\alpha$-stable fields, which will later serve as priors $U$. We highlight certain properties  and assumptions of $\alpha$-stable random fields  that are required in order to analyze the convergence. As our discretization scheme for the unknown 
function $U$  is  based on finite-difference approximations on certain function spaces, we need to verify that $\alpha$-stable random sheets have enough regularity to carry out the convergence analysis.  We aim to understand the convergence both in terms of probability and functional analysis.

To begin we require the concept of $\alpha$-stable stochastic integrals, which we 
recall from \cite{TS94}. Consider  measure space $(E,\Sigma(E),\overline{m})$ where 
\begin{equation*}
\Sigma(E)_0 = \{A \in \Sigma(E): \overline{m}(A) < \infty \},
\end{equation*}
denotes a subset of $\Sigma(E)$ that consists of  sets of finite $\overline{m}$-measure.
In our inverse problem, the unknown  $U$ will be a random field defined on $E\subset \mathbb R^d$.

\begin{defn}
Let $0<\alpha<2$. A random  $\sigma$-additive set function
\begin{equation*}
M:\Sigma(E)_0 \rightarrow L^0(\Omega),
\end{equation*}
is called an $\alpha$-stable random measure on $(E,\Sigma(E)_0)$ with control measure $\overline{m}$ and skewness parameter $\beta$, if it is independently scattered and for every 
$A\in\Sigma(E)_0$, 
\begin{equation*} 
M(A) \sim S_{\alpha} \Bigg((\overline{m}(A))^{1/ \alpha},\frac{\int_{A}\beta(x)\overline{m}(dx)}{\overline{m}(A)},0\Bigg),
\end{equation*}
 The random measure $M$ is called symmetric, if $\beta=0$.
\end{defn}
{The above definition holds for non-Gaussian examples, hence $\alpha \neq 2$}. By stating independently scattered we mean that if $A_1,A_2, \ldots , A_k$ belong to $\Sigma(E)_0$ and are disjoint then the random variables  $M(A_1),M(A_2),\ldots ,M(A_k)$ are independent. Furthermore, $\sigma$-additivity means that if $A_1,A_2,\ldots$, that belong to $\Sigma(E)_0$, are disjoint and $\cup^{\infty}_{j=1}A_j \in \Sigma(E)_0$ then 
\begin{equation*}
M\Big(\bigcup^{\infty}_{j=1}A_j\Big) = \sum^{\infty}_{j=1}M(A_j) \ \ \textrm{almost surely.}
\end{equation*}

 Stochastic integrals of deterministic functions $f$ with respect to    $\alpha$-stable random measure $M$ are defined similarly to the Gaussian case, through limits of  simple functions $f_k$. However, the convergence holds in a weaker sense. Namely 
 \begin{equation}\label{eqn:simplef}
 \int_{E} f(x) M(dx)=  \lim_{k\rightarrow \infty}\int_E f_k(x) M(dx),
 \end{equation}
 in probability if (and only if) $\lim_{k\rightarrow \infty} f_k=f$ in $L^\alpha(E)$ (Proposition 3.5.1 in \cite{TS94}).  The values of the random variable  $\int f(x) M(dx)$ can be  specified almost surely  by e.g.\ choosing a  subsequence $\int f_{k_j}(x)M(dx)$ that converges 
 almost surely to $\int _E f(x) M(dx)$.  Recall, that $L^\alpha$ space is only a complete metric space, not a normed space, when  $0<\alpha<1$. 
    
The distribution of the stochastic integral $\int f (x) M(dx)$ is 
$S_{\alpha}(\sigma_f,\beta_f,\gamma_f)$, where
\begin{align*}
\sigma_f &= \Big(\int_{E} |f(x)|^{\alpha}\overline{m}(dx)\Big)^{1/\alpha}, \\
\beta_f &= \frac{\int_{E} f(x)^{\langle \alpha \rangle}\beta(x)\overline{m}(dx)}{\int_{E}|f(x)|^{\alpha}\overline{m}(dx)}, \quad {y^{\langle \alpha \rangle} := |y|^{\alpha} \mathrm{sign}(y)}, \\
\gamma_f &= 
\begin{cases}
   0 & \text{if } \alpha \neq 1, \\
   -\frac{2}{\pi}\int_{E}f(x)\beta(x)\ln |f(x)|\overline{m}(dx)  & \text{if } \alpha = 1.
  \end{cases}
\end{align*}
We consider modeling our unknown  $U$ as an $\alpha$-stable sheet on the hypercube
  $[0,1]^d$. 
\begin{defn}
A random field $U$ on $[0,1]^d$  is called a symmetric $\alpha$-stable random sheet  if it   can be expressed (up to a version) as a  stochastic integral
\begin{equation}\label{eqn:sheet}
U(x)= \int_{[0,1]^d} f(x,x')M(dx'), \; x\in [0,1]^d,
\end{equation}
where 
\begin{equation}\label{eqn:sheet_f}
f(x,x')=\begin{cases}
1 \text{ when }  x_i'\leq x_i  \text{ for all }  i=1,\dots, d\\
0 \text{ otherwise},
\end{cases}
\end{equation}
and $M$ is  symmetric $\alpha$-stable random measure on $([0,1]^d,\mathcal B([0,1]^d), |\cdot|)$ with Lebesgue's measure $|\cdot |$ as the control measure.
\end{defn}
The  $\alpha$-stable random sheet has marginal distributions  
\begin{equation*}
U(x) \sim S_{\alpha} \Bigg((x_1\cdots x_d)^{1/ \alpha},0,0\Bigg), 
\end{equation*}
where $x=(x_1,\dots,x_d) \in[0,1]^d$. Moreover, the values $U(x)$ and $U(x')$ are  statistically dependent.

\subsection{Sample paths}
\label{ssec:cauchy}

Let us  now concentrate on the nature of the mapping $x\mapsto U(x;\omega)$ for fixed $\omega$,  where each $U(x)$ is defined  by \eqref{eqn:sheet}.  This is an important point, because we are interested in modelling our unknown function with  $U$ and wish to specify a Banach space $F$  where $U$ lives. In other words, we wish to describe $U$ as an $F$-valued random variable for some Banach space $F$. At this point,  we have defined $U$ as a random field,  through a family of random variables. 
We will heavily utilise another way of describing stable random fields, the  so-called LePage series representation   (\cite{TS94}, Theorem 3.9.1), which is often used in deriving sample path properties of stable random fields.

For the convenience of the reader,  we provide the proofs below and begin with two 
preparatory lemmas. We recall, 
that  arrival times $\Gamma_k$ of a  Poisson process with arrival rate 1 can be expressed as 
$$
\Gamma_k =\sum_{j=1}^k \lambda_j,
$$
where $\lambda_j$ are independent identically distributed 
random variables with common probability density ${f(x)= \exp(- x)1_{[0,\infty)}(x)}$. 

\begin{lem}\label{lem:poisson}
Let $\Gamma_k$ be arrival times of a Poisson process with arrival rate 1. There exists 
 $c(\omega),C(\omega) >0$ and $K(\omega)\in \mathbb N$ so that 
 $ c(\omega) k \leq \Gamma_k(\omega)  \leq C(\omega) k $ for all $k\geq K(\omega)$    and  for $P$- almost every $\omega$. Moreover, the series  
$$
\sum_{k=1}^\infty \Gamma_k^{-\kappa},
 $$
converges almost surely for all $\kappa>1$.
\end{lem}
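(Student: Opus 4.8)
The plan is to prove the statement in two parts, corresponding to the two claims in Lemma \ref{lem:poisson}. The first claim is a two-sided linear growth bound on the arrival times $\Gamma_k$, and the second is almost-sure convergence of the series $\sum_k \Gamma_k^{-\kappa}$ for $\kappa > 1$. The natural engine for both parts is the Strong Law of Large Numbers (SLLN) applied to the i.i.d.\ interarrival times $\lambda_j$, which have common density $f(x) = \exp(-x)1_{[0,\infty)}(x)$, hence mean $\E[\lambda_j] = 1$ and finite variance.

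Let me sketch the first claim. Since $\Gamma_k = \sum_{j=1}^k \lambda_j$ with the $\lambda_j$ i.i.d.\ and integrable, the SLLN gives $\Gamma_k(\omega)/k \to 1$ almost surely as $k \to \infty$. Fix any $0 < c < 1 < C$ (for instance $c = 1/2$, $C = 2$). Then for $P$-almost every $\omega$ there exists $K(\omega) \in \N$ such that $c \leq \Gamma_k(\omega)/k \leq C$ for all $k \geq K(\omega)$, which is exactly the asserted bound $c\,k \leq \Gamma_k(\omega) \leq C\,k$. This part is essentially immediate once the SLLN is invoked; the constants $c(\omega), C(\omega)$ in the statement can in fact be taken deterministic (independent of $\omega$), with only $K(\omega)$ genuinely depending on $\omega$.

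For the second claim, I would use the linear lower bound just established. On the almost-sure event where $\Gamma_k(\omega) \geq c\,k$ for all $k \geq K(\omega)$, we estimate the tail of the series by
\begin{equation*}
\sum_{k=K(\omega)}^\infty \Gamma_k(\omega)^{-\kappa} \leq \sum_{k=K(\omega)}^\infty (c\,k)^{-\kappa} = c^{-\kappa}\sum_{k=K(\omega)}^\infty k^{-\kappa},
\end{equation*}
and the right-hand side is finite precisely because $\kappa > 1$ makes $\sum_k k^{-\kappa}$ a convergent $p$-series. The finitely many initial terms $\Gamma_k^{-\kappa}$ for $k < K(\omega)$ are almost surely finite (as $\Gamma_k > 0$ a.s.), so the full series converges almost surely. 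Since the almost-sure convergence event does not depend on $\kappa$ (the linear bound holds simultaneously), the conclusion holds for all $\kappa > 1$ on a single event of full measure.

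**The main subtlety**, rather than a genuine obstacle, is a matter of quantifier order: one wants the growth bound to hold on a single full-measure event valid for every large $k$, and then to transfer that event to the series estimate uniformly in $\kappa$. This is handled cleanly by first fixing the SLLN exceptional null set once and for all, then deriving both claims on its complement. A minor point worth stating carefully is that $\Gamma_k > 0$ almost surely for each fixed $k$ (so the terms $\Gamma_k^{-\kappa}$ are well-defined), which follows since $\lambda_1 > 0$ almost surely. No heavy machinery beyond the SLLN and comparison with the $p$-series is required.
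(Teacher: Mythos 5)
Your proposal is correct and follows essentially the same route as the paper's proof: the strong law of large numbers applied to the i.i.d.\ exponential interarrival times gives $\Gamma_k/k \to 1$ almost surely, yielding the two-sided linear bound, and the series is then controlled by splitting off the finitely many terms before $K(\omega)$ and comparing the tail with $c^{-\kappa}\sum_k k^{-\kappa}$. Your observation that $c$ and $C$ may be taken deterministic (with only $K(\omega)$ random) is a minor sharpening of the paper's statement, but the argument is otherwise identical.
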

\begin{proof}
By the law of large numbers
\begin{equation*}
\lim_{k\rightarrow \infty }\frac{ \Gamma_k}{k}=\mathbb E[\lambda_j] =1,
\end{equation*}
almost surely. Therefore, $\Gamma_k \sim  k $ for large $k$ and there exists  $c=c(\omega), C=C(\omega)>0$ and  integers $K=K(\omega)$ such that 
$ c(\omega ) k \leq \Gamma_k (\omega) \leq C(\omega) k $ for all $k> K(\omega)$
 almost surely.  Inserting the lower bound in the series 
 \begin{equation*}
 \sum_{k=1}^\infty \Gamma_k (\omega) ^{-\kappa} \leq  \sum_{k=1} ^{K(\omega)} 
 \Gamma_k (\omega) ^{-\kappa} + 
     c(\omega) ^{-\kappa}\sum_{k=K(\omega)+1}^\infty   k^{-\kappa},
 \end{equation*}
 shows that the series converges almost surely.  
\end{proof}

\begin{lem}\label{lem:cond3}
Let $(a_k), (b_k)$  be mutually independent 
random sequencies, and let $g$ be a separable Banach space-valued  Borel  measurable function. The series 
$$
\sum_{k=1}^\infty g(a_k, b_k),
$$
converges almost surely in $F$ if and only if the series 
$$
\sum_{k=1}^\infty g(a_k ,b_k^0),
$$
converges for almost every sample $b_k^0$ of $b_k$.
\end{lem}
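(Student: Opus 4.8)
The plan is to reduce the statement to the section form of Fubini's theorem on a product probability space, once the two sequences are realized independently. Because $(a_k)$ and $(b_k)$ are mutually independent, I may assume without loss of generality that they live on a product space $(\Omega_a\times\Omega_b,\, P_a\otimes P_b)$, with $a_k=a_k(\omega_a)$ depending only on the first coordinate and $b_k=b_k(\omega_b)$ only on the second. The central object is then the convergence event
$$
C=\Big\{(\omega_a,\omega_b):\ \sum_{k=1}^\infty g\big(a_k(\omega_a),b_k(\omega_b)\big)\ \text{converges in } F\Big\}.
$$

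First I would check that $C$ is $P_a\otimes P_b$-measurable. Since $g$ is Borel measurable and each $a_k,b_k$ is measurable, every partial sum $S_n=\sum_{k=1}^n g(a_k,b_k)$ is an $F$-valued random variable, and by separability of $F$ the map $(\omega_a,\omega_b)\mapsto\|S_n-S_{n'}\|$ is measurable. As $F$ is complete, convergence of the series is equivalent to the Cauchy property of $(S_n)_n$, so
$$
C=\bigcap_{m\geq 1}\ \bigcup_{N\geq 1}\ \bigcap_{n,n'\geq N}\Big\{\ \|S_n-S_{n'}\|\leq \tfrac1m\ \Big\},
$$
a countable combination of measurable sets, hence measurable.

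Next I would apply Tonelli's theorem to the nonnegative indicator $1_C$. Writing $C_{\omega_b}=\{\omega_a:(\omega_a,\omega_b)\in C\}$ for the $\omega_b$-section, this yields
$$
(P_a\otimes P_b)(C)=\int_{\Omega_b} P_a\big(C_{\omega_b}\big)\, dP_b(\omega_b),
$$
with $0\leq P_a(C_{\omega_b})\leq 1$. By definition, the full series $\sum_k g(a_k,b_k)$ converges almost surely precisely when $(P_a\otimes P_b)(C)=1$, whereas for a fixed sample $b^0=(b_k(\omega_b))$ the series $\sum_k g(a_k,b^0_k)$ converges for $P_a$-almost every $\omega_a$ precisely when $P_a(C_{\omega_b})=1$. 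Since an integrand bounded by $1$ has integral $1$ over a probability space if and only if it equals $1$ almost everywhere, the displayed identity shows that $(P_a\otimes P_b)(C)=1$ is equivalent to $P_a(C_{\omega_b})=1$ for $P_b$-almost every $\omega_b$, which is exactly the asserted equivalence.

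The only genuinely delicate point is the measurability of $C$: this is where separability of $F$ (to express the Cauchy criterion through a countable family of measurable norm conditions) and completeness of $F$ (to identify Cauchy with convergent sequences) are both used. Once $C$ is known to be a measurable subset of the product space, everything else is the routine section argument, and no assumption on the internal dependence structure of the $a_k$'s (such as independence) is needed, only the independence of the sequence $(a_k)$ from $(b_k)$.
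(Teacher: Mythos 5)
Your proof is correct, and it is the same underlying argument as the paper's, but carried out with different machinery. The paper works abstractly on the original probability space: it writes $P(A)=\mathbb E\left[\mathbb E[1_A\mid b_k,\ k=1,2,\dots]\right]$ for the convergence event $A$, and observes that a conditional expectation bounded by $1$ whose expectation equals $1$ must equal $1$ almost surely (and conversely). Your route replaces the tower property by a canonical product realization $(\Omega_a\times\Omega_b,\,P_a\otimes P_b)$ plus Tonelli's theorem applied to $1_C$, with the same punchline that a $[0,1]$-valued integrand with integral $1$ equals $1$ a.e. The two are dialects of one argument, since the conditional expectation given an independent $\sigma$-algebra is exactly the Fubini section integral; but your version makes explicit two points the paper leaves tacit. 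First, the measurability of the convergence event (via the Cauchy criterion, using completeness and separability of $F$), which the paper never addresses. Second, the place where independence of $(a_k)$ from $(b_k)$ actually enters: in the paper's formulation one still needs to identify $\mathbb E[1_A\mid (b_k)]$ evaluated at a sample $b^0$ with the probability that $\sum_k g(a_k,b_k^0)$ converges — a substitution principle for independent variables that the paper uses silently (it is the analogue of the conditional-distribution fact it cites from Lemma 3.2 of \cite{SL12i} earlier) — whereas in the product-space picture this identification is automatic from the section structure. The trade-off: the paper's proof is shorter; yours is self-contained and leaves no measure-theoretic step unaccounted for. Your closing remark that no independence within $(a_k)$ itself is required is also correct and consistent with how the lemma is applied.
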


\begin{proof}
 If  $A\in \Sigma$ is any event,  say
 \begin{equation*}
A= \left \{\omega \in \Omega:  \Vert \sum_{k=1}^\infty g(a_k,b_k) \Vert <\infty \right\},
\end{equation*}
then
\begin{equation*}  
P(A) = \mathbb E \left[ \mathbb E[1_A | b_k , \ k=1,2,\dots ] \right]=1,
\end{equation*}
if and only if $\mathbb E[1_A | b_k , \ k=1,2,\dots]=1$  almost surely. Indeed,
 conditional expectation of  $1_A$ is at most 1. A simple proof by contradiction shows that 
 the conditional expectation must equal 1 almost surely if $P(A)=1$. The other direction is
 trivial. 

\end{proof}

The next theorem  provides  a  series representation for the  symmetric $\alpha$-stable random measure $M$ with Lebesgue's control measure. In the theorem, we prove series representations 
of stochastic integrals,  when $f\in L^{p}$.
This suffices for our purposes, because we can always choose $p>\alpha$ for the functions that we study. The approach helps us to use  almost surely equivalence of stochastic integrals  instead of the more  common concept of equivalence in distribution.

\begin{thm}\label{thm:rrep}
Let $0<\alpha<2$ and  $E\subset \mathbb R^d$ be a measurable set with  $0<|E|<\infty$.  Let $\Gamma_k$ be  arrivals times of a Poisson process with  arrival rate 1.  Let $(V_k, \rho_k)$ form an  i.i.d. sequence of random vectors  independent of $\Gamma_k$ that consist of 
 uniformly distributed $d$-dimensional random vectors  $V_k$ on $E$, and  $\{-1,1\}$-valued random variables $\rho_k$ whose  conditional distribution given $V_k$ is   $P(\rho_k =1 |V_k)= 1 - P (\rho_k =-1|V_i)= 0.5$. Let
$$
C_\alpha =\left(\int _0^\infty x^{-\alpha} \sin(x) dx \right)^{-1}.
$$
Then
\begin{equation}\label{eqn:seriesn}
M(A):=  (C_\alpha |E|) ^{1/\alpha}\sum _{k=1}^\infty \rho_k \Gamma_k^{-1/\alpha} 
 1_A(V_k), 
\end{equation} 
where $A\subset E$ are Borel sets, defines  a symmetric $\alpha$-stable random 
measure $M$ with Lebesgue's control measure on $E$. 

If  $f\in L^{p} (E)$,  where  $p\geq 1$ when $0<\alpha<1$ and 
$p>\alpha$ otherwise,    then the  series
\begin{equation}\label{eqn:fun_series}
(C_\alpha |E|) ^{1/\alpha}\sum _{k=1}^\infty \rho_k \Gamma_k^{-1/\alpha} 
 f(V_k),
\end{equation}
converges almost surely and  it  coincides   with the stochastic integral $\int f(x) M(dx)$. 
\end{thm}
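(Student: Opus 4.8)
The plan is to recognise $\{(\Gamma_k,V_k,\rho_k)\}_{k\ge 1}$ as the atoms of a Poisson random measure $N$ on $(0,\infty)\times E\times\{-1,1\}$ with intensity $dr\otimes|E|^{-1}\mathbf 1_E(x)\,dx\otimes\tfrac12(\delta_{-1}+\delta_1)$, so that both $M(A)$ in \eqref{eqn:seriesn} and the series \eqref{eqn:fun_series} become integrals of a deterministic kernel against $N$. With this picture I would establish, in order, (i) the almost sure convergence of \eqref{eqn:fun_series}, (ii) the one-dimensional distributions together with independent scattering and $\sigma$-additivity, and finally (iii) the identification with the stochastic integral $\int f\,M(dx)$.

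For (i) I would apply Lemma~\ref{lem:cond3} with the roles $b_k=\Gamma_k$, reducing the claim to showing that for almost every sample $(\gamma_k)$ of $(\Gamma_k)$ the series $\sum_k\rho_k\gamma_k^{-1/\alpha}f(V_k)$ converges almost surely in the remaining randomness $(\rho_k,V_k)$. By Lemma~\ref{lem:poisson} I may assume $\gamma_k\asymp k$, so $\gamma_k^{-1/\alpha}=O(k^{-1/\alpha})$, and the summands $Y_k=\rho_k\gamma_k^{-1/\alpha}f(V_k)$ are independent and symmetric. When $0<\alpha<1$ one has $1/\alpha>1$ and $f\in L^1(E)$, whence $\mathbb E\sum_k\gamma_k^{-1/\alpha}|f(V_k)|<\infty$ and the series converges absolutely; this is exactly why $p\ge 1$ suffices. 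When $1\le\alpha<2$ absolute convergence fails and I would invoke the Kolmogorov three-series theorem, which for symmetric summands reduces to $\sum_kP(|Y_k|>1)<\infty$ and $\sum_k\mathbb E[Y_k^2\mathbf 1_{\{|Y_k|\le 1\}}]<\infty$. Since $V_k$ is uniform on $E$ and $f\in L^p$, Markov's inequality gives $P(|Y_k|>1)\lesssim k^{-p/\alpha}\,\mathbb E|f|^p$, while the truncation bound $\mathbb E[f^2\mathbf 1_{\{|f|\le T\}}]\le T^{2-p}\mathbb E|f|^p$ gives $\mathbb E[Y_k^2\mathbf 1_{\{|Y_k|\le 1\}}]\lesssim k^{-p/\alpha}\,\mathbb E|f|^p$; both series converge precisely when $p>\alpha$, which is what pins down the hypotheses on $p$.

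For (ii) I would truncate at $\Gamma_k\le T$, where only finitely many atoms survive almost surely, so that the truncated sum is a genuine compound-Poisson functional whose characteristic function is given exactly by the exponential formula for Poisson integrals,
\begin{equation*}
\mathbb E\!\left[\exp\!\Big(i\theta c\!\!\sum_{\Gamma_k\le T}\!\!\rho_k\Gamma_k^{-1/\alpha}g(V_k)\Big)\right]
=\exp\!\left(\frac{1}{|E|}\int_0^T\!\!\int_E\bigl(\cos(\theta c r^{-1/\alpha}g(x))-1\bigr)\,dx\,dr\right),
\end{equation*}
with $c=(C_\alpha|E|)^{1/\alpha}$ and the right-hand integral now absolutely convergent. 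Letting $T\to\infty$ the left side converges by part (i), while on the right the substitution $u=cr^{-1/\alpha}$ together with the identity $\int_0^\infty(1-\cos v)v^{-\alpha-1}\,dv=(\alpha C_\alpha)^{-1}$ (one integration by parts from the definition of $C_\alpha$) yields $\log\mathbb E[e^{i\theta S(g)}]=-|\theta|^\alpha\|g\|_{L^\alpha(E)}^\alpha$, where $S(g)$ denotes the limit series; the normalisation $c^\alpha=C_\alpha|E|$ is precisely what cancels the stray constants. Taking $g=\mathbf 1_A$ gives $M(A)\sim S_\alpha(|A|^{1/\alpha},0,0)$, running the same computation with $\sum_j\theta_j\mathbf 1_{A_j}$ for disjoint $A_j$ shows the joint characteristic function factorises (independent scattering), and $\sigma$-additivity follows because the tail $M(\cup_{j>n}A_j)$ has scale $(\sum_{j>n}|A_j|)^{1/\alpha}\to 0$ and hence vanishes in probability.

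Finally, for (iii) I would note that the scale formula $S(g)\sim S_\alpha(\|g\|_{L^\alpha},0,0)$ just derived holds for every $g\in L^p(E)$, and that $L^p(E)\hookrightarrow L^\alpha(E)$ on the finite-measure set $E$ since $p\ge\alpha$. For a simple $g=\sum_l b_l\mathbf 1_{A_l}$ the series is a finite linear combination and equals $\sum_l b_l M(A_l)=\int g\,M(dx)$ outright. Choosing simple $f_n\to f$ in $L^p$, hence in $L^\alpha$, the integrals $\int f_n\,M(dx)$ converge in probability to $\int f\,M(dx)$ by \eqref{eqn:simplef} (Proposition~3.5.1 in \cite{TS94}), whereas $S(f_n)-S(f)=S(f_n-f)\sim S_\alpha(\|f_n-f\|_{L^\alpha},0,0)\to 0$ in probability; since the two sequences have identical terms their in-probability limits agree, giving $S(f)=\int f\,M(dx)$ almost surely. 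The step I expect to be the main obstacle is making the Poisson characteristic-function computation rigorous in the non-absolutely-convergent regime $1\le\alpha<2$ — justifying the passage $T\to\infty$ and checking that $C_\alpha$ and the normalisation $c$ conspire to produce exactly the scale $\|g\|_{L^\alpha}$ — together with the sharp three-series bookkeeping that forces $p>\alpha$.
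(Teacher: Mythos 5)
Your proposal is correct, but its core — the identification of the distribution — runs along a genuinely different route from the paper. For the almost sure convergence of the series you do essentially what the paper does: condition on $\{\Gamma_k\}$ (Lemma \ref{lem:cond3}), use $\Gamma_k\asymp k$ (Lemma \ref{lem:poisson}), absolute convergence when $0<\alpha<1$, and Kolmogorov's three-series theorem with the same Markov-inequality bookkeeping that forces $p>\alpha$ when $1\le\alpha<2$ (both you and the paper implicitly take $p\le 2$ in the truncated-variance bound, which is harmless since $|E|<\infty$ allows replacing $p$ by $\min(p,2)>\alpha$). Where you diverge is in computing the law: you view $\{(\Gamma_k,V_k,\rho_k)\}$ as a marked Poisson random measure and read off the characteristic function from Campbell's exponential formula, truncating at $\Gamma_k\le T$ and letting $T\to\infty$; the substitution together with the integration-by-parts identity $\int_0^\infty(1-\cos v)\,v^{-\alpha-1}dv=(\alpha C_\alpha)^{-1}$ then yields $\log\mathbb E[e^{i\theta S(g)}]=-|\theta|^\alpha\Vert g\Vert^\alpha_{L^\alpha}$ for every $g\in L^p$ in one stroke. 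The paper instead represents $M(A)$ as a limit of normalized sums of i.i.d.\ variables, via the Beta/order-statistics distribution of $\Gamma_k/\Gamma_{n+1}$ and the law of large numbers, and identifies the stable law through its tail behaviour and a domain-of-attraction criterion \cite{Geluk2000}. Your route buys more: the scale formula for arbitrary $g\in L^p$ makes independent scattering an immediate factorization of the joint characteristic function, and it reduces the identification with $\int f\,dM$ to uniqueness of in-probability limits, whereas the paper needs a separate fractional-moment Markov estimate with exponent $\beta<\alpha$ for that step; the paper's route is in turn more elementary, avoiding Poisson point-process machinery. One small repair to your sketch: for $\sigma$-additivity, "the tail vanishes in probability" only gives convergence in probability of $\sum_{j\le n}M(A_j)$ to $M(\cup_j A_j)$; to get the almost sure equality demanded by the definition of a random measure you must upgrade this, as the paper does, via the It\={o}--Nisio/L\'evy equivalence theorem for sums of independent random variables.
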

 \begin{proof}
   First, we will verify  convergence of  \eqref{eqn:seriesn}. 

For $0<\alpha<1$, a direct application of  Lemma  \ref{lem:poisson}
shows the convergence.  For $1\leq \alpha <2$, we proceed as in \cite{TS94} by using  Kolmogorov's three series theorem, but apply  it    under conditioning with respect to    $\Gamma_k=\Gamma_k^0$ in the spirit of 
Lemma  \ref{lem:cond3}.

  For fixed $\lambda>0$, 
 \begin{equation} \label{eqn:Kol_1}
 \sum_{k=1}^\infty   P( |\rho_k (\Gamma_k^0)^{-1/\alpha} 1_A(V_k)| > \lambda) \leq  
 \sum_{k=1}^\infty  P(
 1_A(V_k)>    (ck) ^{1/\alpha} \lambda) <\infty, 
  \end{equation}
since    $\Gamma_k ^0\sim  k$ for large $k$ by Lemma \ref{lem:poisson}.
Moreover, the sum of expectations of \\ $Y_k:=\rho_k (\Gamma_k^0) ^{-1/\alpha} 1_A(V_k) 1_{  |\rho_k (\Gamma_k^0) ^{-1/\alpha} 1_A(V_k)| \leq \lambda }$, that is, 
\begin{equation} \label{eqn:Kol_2}
 \sum_{k=1}^\infty \mathbb E[ Y_k]   =\sum_{k=1}^\infty   \mathbb E [\rho_k]   \; \mathbb E [ (\Gamma_k^0) ^{-1/\alpha} 1_{  (\Gamma_k^0) ^{-1/\alpha} 1_A(V_k)  \leq \lambda }],
\end{equation}
vanishes due to the distribution of $\rho_k$, and the sum of variances 
\begin{equation}\label{eqn:Kol_3}
 \sum_{k=1}^\infty \mathbb E[Y_k^2] \leq  \sum_{k=1}^\infty  (\Gamma_k^0)^{-2/\alpha} \mathbb E [1_A(V_k)^2],
\end{equation}
is finite by Lemma \ref{lem:poisson}.  Hence,   $M(A)$ is a well-defined random variable. 

Secondly, we show that $M(A)$  has the right distribution.  We first remark, that 
\begin{equation} \label{eqn:fin}
\lim_{n\rightarrow \infty }  (C_\alpha |E|) ^{1/\alpha}\ \left( \frac{\Gamma_{n+1}}{n}\right)^\frac{1}{\alpha} 
 \sum _{k=1}^n \rho_k \Gamma_k^{-1/\alpha}
 1_A(V_k) =M(A),
\end{equation}
by the law of large numbers. Additionally the random vector with components \\
$\frac{\Gamma_k}{\Gamma_{n+1} }\sim \operatorname{Beta}(k,n+1-k)$, $k=1,\dots, n$,
 is distributed as the random vector  $(U^{(1)}, \dots, U^{(n)})$ whose components are  
independent uniformly distributed  random variables $U_k$ on $(0,1)$ ordered increasingly.
In the finite sum \eqref{eqn:fin}, we may also reorder the random variables
 without changing the distribution of the sum.  This leads to identification of 
 $M(A)$ as  the limit of 
 sums of   independent random variables
 \begin{equation*} 
M(A)=  \lim_{n\rightarrow \infty }  (C_\alpha |E|) ^{1/\alpha}\left( \frac{1}{n}\right)^\frac{1}{\alpha} 
\sum _{k=1}^n \rho_k  U_k^{-1/\alpha} 
 1_A(V_k),
\end{equation*}
 which implies that  $M(A)$ is necessarily  a  stable random variable.   We still need to 
 define the index $\alpha$ and parameters of the stable distribution. 
To do this, we identify the  domain of attraction of the common  distribution of $ (C_\alpha |T|) ^{1/\alpha}\ \gamma_k  U_k^{-1/\alpha}    1_A(V_k) $. Since the common distribution is symmetric, we study  
the tail behaviour 
\begin{equation*}
\begin{split}
P ( | (C_\alpha |E|) ^{1/\alpha} \rho_k  U_k^{-1/\alpha}    1_A(V_k)|  >\lambda ) &= 
  P( U_k <  C_\alpha |E|\lambda ^{-\alpha}   \cap  V_k \in A ) \\
  &= \frac{|A|}{|E|}  \int _0^{\lambda ^{-\alpha } C_\alpha |E| }  1_{(0,1)} (t)  dt  = 
 |A| C_\alpha  \lambda^{-\alpha},
 \end{split}
\end{equation*}
 which implies  \cite{Geluk2000} that distribution of  $M(A)$ is  $\alpha$-stable. 
 Through tail behaviour, we identify  the distribution as   $S(|A|^\frac{1}{\alpha},0,0)$. 

From the characteristic function  of $M(A)$ it is evident, that $M$ is independently scattered.
For disjoint sets $A_k$,  the  sum  $M(\cup_k A_k)$ converges clearly almost surely  and  the sum of independent 
random variables  $\sum_k M(A_k)$   converges through It\={o}-Nisio theorem \cite{IN68}  almost surely and their  limit   coincide  in probability.  By selecting almost surely converging subsequences, the limits coincide almost surely, which shows that $M$ is  countably additive.

 When $0<\alpha<1$, the right hand side  of  $\eqref{eqn:fun_series}$ 
converges absolutely  by    Lemma \ref{lem:poisson}, since its  the conditional expectation 
given the sequence  $\Gamma_k=\Gamma_k^0$ is finite.  By choosing almost surely converging  subsequences from  the simple function  approximations  $M(f_j^\pm)$  that   converge in probability to $M(f)$, we identify the stochastic integral 
\begin{equation*}
M(f)=\lim_{i\rightarrow \infty} M(f_{j_i}) = \lim_{i\rightarrow \infty} 
  (C_\alpha |E|) ^{1/\alpha}\sum _{k=1}^\infty \rho_k  \Gamma_k^{-1/\alpha} 
 f_{j_i} (V_k),
\end{equation*}
with the almost surely converging series representation by taking the limit inside the sum by Lebesgue's dominated convergence  theorem.
 
  When $1\leq \alpha< 2$,  we divide $f\in L^{p}(E)$ into positive and negative parts $f^+$ and $f^-$,  and consider, where necessary, their strictly increasing simple function 
 approximations $f^\pm_{j}$. The right hand side of \eqref{eqn:fun_series} converges almost surely by Kolmogorov's three series theorem similarly to  \eqref{eqn:Kol_1} and \eqref{eqn:Kol_2}, since 
\begin{equation*}
\sum_{k=1}^\infty  P(
 f^\pm(V_k)>    (ck) ^{1/\alpha} \lambda) \leq \frac{\int f^\pm (x)^{p} dx} 
  { (ck) ^{p/\alpha} \lambda^{p}} <\infty ,
\end{equation*}
by Markov's inequality and the assumptions.   The convergence of the sum of  variances holds, since 
\begin{equation*}
\begin{split}
& \sum_{k=1}^\infty \mathbb E[ (\Gamma_k^0) ^{-2/\alpha} f^\pm (V_k)^2 1_{  |(\Gamma_k^0) ^{-1/\alpha} f^\pm (V_k)| \leq  \lambda }]  \\ &=   \sum_{k=1}^\infty \mathbb E
 \left [    (\Gamma_k^0) ^{-2/\alpha}  \int 
 f^\pm (x)^2 1_{  f^\pm (x)  \leq \lambda  (\Gamma_k^0)^{1/\alpha} } dx\right]  \\
  &\leq       \sum_{k=1}^\infty  
   (\Gamma_k^0) ^{-2/\alpha}  (\lambda (\Gamma^0_k) ^{1/\alpha})^{(2-p)} \int   f^\pm(t)^{p } dt \\
    &\leq 
 \sum_{k=1}^\infty  
    (\Gamma_k^0) ^{-p/\alpha}  \lambda^{2-p } \int   f^\pm(x)^{p}dx <\infty ,
\end{split}
\end{equation*}
by the assumptions. Because $p>\alpha\geq 1$ and $|E|<\infty$, the function    $f$ belongs to $ L^\alpha(E)$ by H\"older's inequality.  Moreover, the series representation is distributed as \\
$S_\alpha(  \left( \int |f(x)|^\alpha dx\right)^\frac{1}{\alpha} ,0,0)$. We next verify that simple function  approximations  $M(f_j)$ in 
  \eqref{eqn:simplef}, which  converge in probability to $M(f)$, have limit 
\begin{equation*}
M(f)=  (C_\alpha |E|) ^{1/\alpha}\sum _{k=1}^\infty \rho_k  \Gamma_k^{-1/\alpha} 
 f (V_k).   
\end{equation*}
Indeed,   for $0<\beta <\alpha$, 
\begin{equation*}
\begin{split}
P\left( \left |\sum _{k=1}^\infty \rho_k  \Gamma_k^{-1/\alpha} 
 (f-f_j ) (V_k) \right| > \varepsilon\right) \leq & \frac{\mathbb E \left[  \left|\sum _{k=1}^\infty \rho_k  \Gamma_k^{-1/\alpha} 
 (f-f_j ) (V_k)\right|^\beta\right] }{ \varepsilon^\beta }  \\
  < &
  C \varepsilon^{-\beta}\left( \int |f(x) - f_j(x)| ^\alpha dx \right)^{\beta/\alpha} ,
  \end{split}
\end{equation*}
by Markov's inequality. By choosing an  almost surely converging subsequences, we obtain 
\eqref{eqn:fun_series}.
 \end{proof}

We can now represent the stable random sheet as
\begin{equation*}
U(x)= (C_\alpha)^{1/\alpha } \sum_{k=1}^\infty \rho _k 
\Gamma_k^{-1/\alpha} f(x,V_k),
\end{equation*}
where $f$ is defined as in \eqref{eqn:sheet_f}.  A description of the sample paths of $U$  is given in the next theorem, which  allows application of  this prior  in many inverse problems.
The theorem is proved    for  general $f(x,x')$. We consider 
Borel measurable functions $f$  instead of more general equivalence classes in  $L^p$  in 
order to to guarantee that compositions of $f$ with random variables are themselves  random variables.  
\begin{thm}\label{lem:LPpaths}
Let $0<\alpha<2$, $1\leq p<\infty$, and $p>\alpha$. Let  $E\subset \mathbb R$ be Borel measurable with $0<|E|<\infty$,    and  let $C_\alpha,\rho_k,\Gamma_k,$ and $V_k$ be defined as in Theorem \ref{thm:rrep}.   Let $T\subset \mathbb R^d$ be a measurable set with $|T|>0$.

If $f:T\times E \rightarrow \mathbb C $ is Borel measurable and 
$ \int_T \int _E  |f(x,x')|^p  dx' dx  <\infty$, 
 then the random series representation
 \begin{equation}\label{eqn:series_U}
 U(x)= (C_\alpha |E|)^{1/\alpha }  \sum_{k=1}^\infty \rho _k 
\Gamma_k^{-1/\alpha} f(x,V_k),
 \end{equation}
converges almost surely  in $L^p(T)$.   As a result the distribution of 
$U$ on $L^p(T)$ is symmetric and stable. 
\end{thm}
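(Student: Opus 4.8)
The plan is to establish the almost sure convergence in $L^p(T)$ first, and then read off symmetry and stability from the characteristic functional. Throughout I abbreviate $c_k=(C_\alpha|E|)^{1/\alpha}\Gamma_k^{-1/\alpha}$, $\xi_k=f(\cdot,V_k)\in L^p(T)$ and $X_k=c_k\rho_k\xi_k$, so that the partial sums are $S_N=\sum_{k=1}^N X_k$ and the claimed limit is $U=\lim_N S_N$. By Tonelli the i.i.d.\ summands have finite $p$-th moment in $L^p(T)$, namely $\mathbb E\|\xi_k\|_{L^p(T)}^p=\tfrac{1}{|E|}\int_E\int_T|f(x,x')|^p\,dx\,dx'=:M_p<\infty$. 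Each $S_N$ is measurable as an $L^p(T)$-valued map because $f$ is Borel, so $U$ will be a genuine $L^p(T)$-valued random variable once convergence is shown.

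Since the $\Gamma_k$ are shared across all summands, the $X_k$ are \emph{not} independent, so I would first condition on the arrival times, fixing a sample $\Gamma_k=\Gamma_k^0$ and applying Lemma \ref{lem:cond3} with $a_k=(V_k,\rho_k)$ and $b_k=\Gamma_k$: it suffices to prove that for $P$-almost every $\Gamma^0$ the conditional series $\sum_k c_k^0\rho_k\xi_k$, with $c_k^0=(C_\alpha|E|)^{1/\alpha}(\Gamma_k^0)^{-1/\alpha}$, converges a.s.\ in $L^p(T)$. By Lemma \ref{lem:poisson} I may restrict to those $\Gamma^0$ with $\Gamma_k^0\sim k$, for which $\sum_k(c_k^0)^2<\infty$ (because $\alpha<2$) and $\sum_k(c_k^0)^p<\infty$ (because $p>\alpha$). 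After conditioning, the summands $c_k^0\rho_k\xi_k$ are independent and symmetric in $L^p(T)$ (the signs $\rho_k$ are symmetric given $V_k$), so by the It\^o--Nisio theorem \cite{IN68} it is enough to prove convergence in probability, for which I would show that the tails $R_M=\sum_{k>M}c_k^0\rho_k\xi_k$ satisfy $\mathbb E_{V,\rho}\|R_M\|_{L^p(T)}^p\to0$.

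The step I expect to be most delicate is this $L^p(T)$ tail estimate, because the limit $U(x)$ is $\alpha$-stable and hence has \emph{infinite} $p$-th moment at every fixed $x$; the saving observation is that this divergence is produced solely by integrating over the small arrival $\Gamma_1$, so that \emph{after conditioning on $\Gamma^0$} the moments in question are finite. Concretely, applying the scalar Khintchine inequality in the sign variables inside the spatial integral,
\begin{equation*}
\mathbb E_{V,\rho}\|R_M\|_{L^p(T)}^p
=\mathbb E_V\int_T\mathbb E_\rho\Big|\sum_{k>M}c_k^0\rho_k\,f(x,V_k)\Big|^p dx
\le B_p\,\mathbb E_V\int_T\Big(\sum_{k>M}(c_k^0)^2 f(x,V_k)^2\Big)^{p/2}dx .
\end{equation*}
For $p\ge2$ I would bound the right-hand side by Minkowski's inequality first in $L^{p/2}(T)$ and then in $L^{p/2}(\Omega_V)$, obtaining $M_p\big(\sum_{k>M}(c_k^0)^2\big)^{p/2}$; for $1\le p<2$ I would instead use the subadditivity $(\sum a_k)^{p/2}\le\sum a_k^{p/2}$ to obtain $M_p\sum_{k>M}(c_k^0)^p$. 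In either case the bound tends to $0$ by the summability noted above, giving convergence in probability and hence, through It\^o--Nisio and Lemma \ref{lem:cond3}, the claimed a.s.\ convergence in $L^p(T)$. (For $0<\alpha<1$ one may alternatively argue more directly, since $\sum_k c_k^0\|\xi_k\|_{L^p(T)}$ then converges absolutely by Lemma \ref{lem:poisson}.)

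Finally, for symmetry and stability I would pass to the characteristic functional. For $\lambda\in L^{p}(T)'=L^{p'}(T)$, the a.s.\ $L^p$-convergence together with continuity of $\lambda$ gives $\langle U,\lambda\rangle=\lim_N\langle S_N,\lambda\rangle$ a.s., where $\langle S_N,\lambda\rangle=(C_\alpha|E|)^{1/\alpha}\sum_{k\le N}\rho_k\Gamma_k^{-1/\alpha}g_\lambda(V_k)$ with $g_\lambda(x')=\int_T f(x,x')\lambda(x)\,dx$. H\"older's inequality shows $g_\lambda\in L^p(E)$, so Theorem \ref{thm:rrep} identifies the limit as the stochastic integral $\int_E g_\lambda\,dM\sim S_\alpha\big((\int_E|g_\lambda|^\alpha dx')^{1/\alpha},0,0\big)$. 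Hence the characteristic functional of $U$ is $\phi_U(\lambda)=\exp(-\sigma_\lambda^\alpha)$ with $\sigma_\lambda=(\int_E|g_\lambda|^\alpha dx')^{1/\alpha}$, which is real and positive (so $U$ is symmetric) and satisfies $\phi_U(a\lambda)\phi_U(b\lambda)=\phi_U(c\lambda)$ for $c=(a^\alpha+b^\alpha)^{1/\alpha}$; since characteristic functionals determine laws on separable Banach spaces, $aU_1+bU_2\stackrel{d}{=}cU$ and $U$ is stable.
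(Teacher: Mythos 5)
Your proof is correct and follows essentially the same route as the paper's: conditioning away the arrival times via Lemma \ref{lem:cond3}, reducing almost-sure convergence to convergence in probability with the It\^o--Nisio theorem, controlling the sign-randomized tail sums with Khintchine/Minkowski moment estimates (made possible by the almost-sure summability of $\Gamma_k^{-2/\alpha}$ and $\Gamma_k^{-p/\alpha}$ from Lemma \ref{lem:poisson}), and identifying symmetry and stability through the characteristic functional and Theorem \ref{thm:rrep}. The only substantive deviations are that you condition on $\{\Gamma_k\}$ alone rather than on both $\{\Gamma_k\}$ and $\{V_k\}$, and your single Khintchine estimate split at $p=2$ treats all admissible pairs $(p,\alpha)$ uniformly (including $1<p\leq 2$ with $\alpha<1$, which the paper's three-case split, $p=1$, then $1<p\leq 2$ with $\alpha\geq 1$, then $p>2$, covers only implicitly); one small phrasing fix: state the moment bound for blocks $\sum_{k=M}^{N}$ rather than for the tails $R_M$, which are not yet known to exist before convergence is established.
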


\begin{proof}
When $x'\mapsto f(x,x')$ is Borel measurable, the truncated series $U^N$
  is a well-defined random variable in $L^p(T)$  because  
$f(x,V_k)$ are $L^p(T)$-valued random variables. Indeed,   $L^p(T)$ is a separable Banach space,  so weak and strong  measurability coincide by Remark \ref{remark:pp}. It is enough to consider a countable dense  set $\{g_j\}_{j=1}^\infty $ of the dual space $L^q(T)$,  $q^{-1}+p^{-1}=1$.  
Then  $\langle U_N, g_i\rangle$ are clearly measurable.

We will show that series \eqref{eqn:series_U} converges in $L^p(T)$ by using
methods introduced in \cite{TS94}.
Assume first that $p=1$.    Then 
\begin{equation*}
 \int _T \left|\sum_{k=1}^\infty  \rho_k  (\Gamma_k)^{-1/\alpha }  f(x,V_k)  \right| dx 
 \leq  \sum_{k=1}^\infty   (\Gamma_k)^{-1/\alpha }  \Vert f(\cdot , V_k)\Vert _{L^1(T)},
\end{equation*}
which has finite expectation by Lemma \ref{lem:cond3}, since 
\begin{equation*}
\mathbb E \left[ \Vert f(\cdot , V_k)\Vert _{L^1(T)}\right]
 = \int_T \int_E |f(x,x')| dx' dx< \infty.
\end{equation*}

Consider next $p>1$. By Lemma \ref{lem:cond3},  the series \eqref{eqn:series_U} converges  almost surely in $L^p(T)$ 
if it  converges  under conditioning with $\Gamma_k,V_k$, $k\in\mathbb N$.
  
Let us fix  $\Gamma_k,V_k $ to  constant values $\Gamma_k^{0}, V_k^0$   by conditioning. 
 By independence, the  truncated series  becomes
\begin{equation*}
\sum_{k=1}^N   \rho_k (\Gamma_k^0)^{-1/\alpha} 
 f(x,V_k^0).
\end{equation*} 

By It\=o-Nisio theorem \cite{IN68}, we only need to prove convergence 
in probability, which will follow by Markov's inequality,  if we can show that
\begin{equation}\label{eqn:forT}
\lim_{M,N\rightarrow \infty }\mathbb E\left[ \left  \Vert \sum_{k=M}^N \rho_k (\Gamma_k^0)^{-{1}/{\alpha}}  f(\cdot ,V_k^0) \right \Vert_{L^p(T)}\right]=0,
\end{equation}
for $\mu_{\{\Gamma_k,V_k\}_{k=1}^\infty}$-almost every sequence $(\Gamma_k^0,V_k^0)$.

Take first $1<p\leq 2$ and $\alpha\geq 1$.
By Jensen's inequality, 
\begin{equation*}
\begin{split}
&\mathbb E\left[ \left\Vert \sum_{k=M}^N \rho_k (\Gamma_k^0)^{-{1}/{\alpha}} f(\cdot , V_k^0) \right \Vert_{L^p(T)}\right]^p  \leq 
\mathbb E\left[  \int \left| \sum_{k=M}^N \rho_k(\Gamma_k^0)^{-{1}/{\alpha}}f(x,V_k^0) \right|^p  dx \right] \\
&=    \int   \mathbb  E\left[     \left(  \sum_{k=M}^N  \rho_k (\Gamma_k^0)^{-{1}/{\alpha}} f(x,V_k^0)    \right)^p   \right]  dx  \\
&\leq  \int     \left( \mathbb  E\left[  \left(  \sum_{k=M}^N  \rho_k (\Gamma_k^0)^{-{1}/{\alpha}} f(x,V_k^0)    \right)^2 \right]\right)  ^\frac{p }{2}  dx \\
&=   \int   \left(    \sum_{k=M}^N  (\Gamma_k^0)^{-{2}/{\alpha}} f(x,V_k^0)^2   \right)^{\frac{p }{2}}dx\\
&=   \int   \left( \left(     \sum_{k=M}^N  (\Gamma_k^0)^{-{2}/{\alpha}} \left( f(x,V_k^0)^\alpha \right) ^ \frac{2}{\alpha}   \right)^{\frac{\alpha}{2}}\right)  ^\frac{p}{\alpha} dx\\
&\leq \int \sum_{k=M}^N  (\Gamma_k^0)^{-{p}/{\alpha}} |f(x,V_k^0)|^p  dx,
\end{split}
\end{equation*}
where we used the facts that $\rho_k$ are independent and $\ell_{q}$-norm is smaller 
than $\ell_{q'}$ norm when $0<	q'<1$ is less than $q$. Moreover, $\int |f(x,V_k^0)|^p dx< \infty$, almost surely, since its expectation is finite, and  the series  $\sum_{k=1}^\infty   (\Gamma_k^0)^{-{p}/{\alpha}}$ converges almost surely  by Lemma \ref{lem:poisson}.

Let us then assume $p>2$.  By Jensen's inequality, 
\begin{equation*}
\mathbb E\left[ \left \Vert \sum_{k=M}^N \rho_k (\Gamma_k^0)^{-{1}/{\alpha}} f(\cdot , V_k^0) \right \Vert_{L^p(T)}\right]^2 \leq 
\mathbb E\left[  \int \left| \sum_{k=M}^N \rho_k(\Gamma_k^0)^{-{1}/{\alpha}}f(x,V_k^0) \right|^p  dx \right]^\frac{2}{p}.
\end{equation*}
The order of expectation and integration can be changed by Fubini's theorem. We apply Khinchine's inequality to 
\begin{equation*}
\mathbb E \left[ \left| \sum_{k=M}^N \rho_k (\Gamma_k^0)^{-{1}/{\alpha}} f(x,V_k^0) \right|^p\right]  \leq c \left(\sum_{k=M}^N  
 (\Gamma_k^0)^{-{2}/{\alpha}}|f(x, V_k^0)|^2\right)^\frac{p}{2}.
\end{equation*}

Application of Minkowski's inequality with index $\frac{p}{2}$ produces the needed estimate for the limit \eqref{eqn:forT} through
\begin{equation*}
\begin{split}
\left( \int  \left(\sum_{k=M}^N  
 (\Gamma_k^0)^{-{2}/{\alpha}} |f(x,V_k^0)|^2\right)^\frac{p}{2}  dx\right)^\frac{2}{p} & \leq 
\sum_{k=M}^N      \left(  \int  (\Gamma_k^0)^{-{p}/{\alpha}} |f(x,V_k^0)|^p  dx \right)^\frac{2}{p} 
 \\  &= \sum_{k=M}^N   (\Gamma_k^0)^{-{2}/{\alpha}} \left(\int | f(x,V_k^0)| ^p  dx\right)^\frac{2}{p},
\end{split}  
  \end{equation*}
where
\begin{equation*}
\lim _{M,N\rightarrow \infty }\sum_{k=M}^N    (\Gamma_k^0)^{-{2}/{\alpha}} =0,
\end{equation*}
almost surely since the series  $\sum_{k=1}^\infty   \Gamma_k^{-{2}/{\alpha}}$ converges  almost surely
by Lemma \ref{lem:poisson}.  The expectation of the integral is finite according to the assumptions, 
so the integral is almost surely finite.

At this point,  we know that  $U\in L^p(T)$ almost surely. In  Remark  \ref{remark:pp}, we observed that  it is enough to show that $U$ is weakly measurable to verify that 
$U$ is $L^p(T)$-valued random variable.  In our case $\langle U,g_k\rangle_{L^p, L^q}$,  where $g_k\in L^q(T)$ and  $1/p +1/q =1$, are limits of truncated random series 
\begin{equation*}
 \langle U, g_k\rangle_{L^p, L^q} = 
 \lim_{N\rightarrow \infty } 
 C_\alpha ^{1/\alpha} \sum _{k=1}^N \rho_k \Gamma_k^{-1/\alpha} 
\langle f(\cdot ,V_k),g_k\rangle _{ L^p, L^q},
\end{equation*}
by continuity of the  linear forms $g_k$  on $L^p(T)$. Hence, they are measurable.

Let $U_1$ and $U_2$ be independent copies of $U$. Then 
\begin{equation*}
\begin{split}
\mathbb E [e^{i \langle aU_1+bU_2,g\rangle}]
 &= \exp\left( - \int \left| \int a f(x,x')g(x) dx \right|^\alpha + \left| \int b  f(x,x')g(x) dx \right|^\alpha dx' \right)\\
 &= 
  \exp \left( -  (a^{\alpha}+b^{\alpha})  \int \left| \int  f(x,x')g(x) dx \right|^\alpha dx'
  \right),
  \end{split}
\end{equation*}
by Theorem \ref{thm:rrep}.
Choosing  $a=b=2^{-1/\alpha}$ for all $g$ in the dual space of $L^p(T)$
 leads to the distribution of $U$. 
 According to the definition \eqref{eq:stable_def},   $U$ is then stable on $L^p(T)$ and, moreover, 
also symmetric.

  \end{proof}
 
 The stable random sheets are defined with bounded functions $f$. An immediate consequence of the previous theorem is $L^p$-regularity of the random sheets.
 \begin{cor}
 The stable  random sheets $U$  defined by \eqref{eqn:sheet} are  $ L^p([0,1]^d)$-valued random variables  for   $1\leq p<\infty$. 
 \end{cor}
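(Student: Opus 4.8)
The plan is to reduce the corollary directly to Theorem~\ref{lem:LPpaths}, which has already done all the heavy lifting. The stable random sheet is defined by \eqref{eqn:sheet} as the stochastic integral $U(x)=\int_{[0,1]^d} f(x,x')\,M(dx')$ with the indicator-type kernel $f$ from \eqref{eqn:sheet_f} and control measure equal to Lebesgue measure on $E=[0,1]^d$. So the only task is to check that this particular $f$ satisfies the hypotheses of Theorem~\ref{lem:LPpaths} with $T=E=[0,1]^d$, after which the theorem delivers almost sure convergence of the series representation \eqref{eqn:series_U} in $L^p(T)$ and hence that $U$ is an $L^p([0,1]^d)$-valued random variable with a symmetric stable distribution.

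First I would record that, for the given parameters $1\le p<\infty$, the constraints of the theorem are met: we need $0<\alpha<2$, $1\le p<\infty$, and $p>\alpha$. Since here $\alpha\in(0,2)$ is fixed and $p$ ranges over $[1,\infty)$, for any desired $p$ one simply notes $p>\alpha$ holds automatically once $\alpha<1$, and for $\alpha\in[1,2)$ it holds for all $p>\alpha$; in either case every $p\ge 1$ can be accommodated by the theorem (the corollary quantifies over all such $p$). The second and only substantive point is the integrability hypothesis $\int_T\int_E |f(x,x')|^p\,dx'\,dx<\infty$. Because $f(x,x')$ takes values in $\{0,1\}$ by \eqref{eqn:sheet_f}, we have $|f(x,x')|^p=f(x,x')\le 1$ pointwise, so
\begin{equation*}
\int_{[0,1]^d}\int_{[0,1]^d} |f(x,x')|^p\,dx'\,dx \le \int_{[0,1]^d}\int_{[0,1]^d} 1\,dx'\,dx = 1 <\infty.
\end{equation*}
The kernel $f$ is clearly Borel measurable, being the indicator of the measurable region $\{(x,x'): x_i'\le x_i \text{ for all } i\}$.

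With both hypotheses verified, Theorem~\ref{lem:LPpaths} applies verbatim: the series \eqref{eqn:series_U} converges almost surely in $L^p([0,1]^d)$, and by the almost sure coincidence of this series with the stochastic integral (established in Theorem~\ref{thm:rrep}, since $f(\cdot,V_k)\in L^p$ and $p>\alpha$), the sheet $U$ defined by \eqref{eqn:sheet} is realized as an $L^p([0,1]^d)$-valued random variable whose law is symmetric and stable. Since $p\in[1,\infty)$ was arbitrary, the conclusion holds for every such $p$.

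I do not expect any genuine obstacle here; the corollary is an immediate specialization. The only thing warranting a line of care is making sure the boundedness of $f$ is invoked to cover the full range $p\in[1,\infty)$ uniformly (in contrast to the theorem, where general $f$ forces the coupling $p>\alpha$ and a finite $L^p$ norm)\textemdash boundedness of the indicator kernel trivially gives finite $L^p$ mass on the bounded domain $[0,1]^d$ for all $p$, which is what makes the statement hold for the entire range rather than a restricted one.
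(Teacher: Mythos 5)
Your overall route is the same as the paper's: the paper gives no separate proof of this corollary, remarking only that the sheets are defined with a bounded kernel $f$ and that the claim is an immediate consequence of Theorem~\ref{lem:LPpaths}; your verification that $f$ is Borel measurable and that $\int_T\int_E |f(x,x')|^p\,dx'\,dx \le 1$ is exactly the right specialization. However, there is a genuine gap in how you handle the exponent constraint. Theorem~\ref{lem:LPpaths} carries the standing hypothesis $p>\alpha$, and this is a restriction on the exponents that boundedness of $f$ does \emph{not} remove: when $\alpha\in[1,2)$ and $1\le p\le\alpha$, your sentence ``in either case every $p\ge 1$ can be accommodated by the theorem'' is simply false --- the theorem does not apply to such $p$, however nice the kernel is. Your closing paragraph repeats the conflation: boundedness of the indicator gives finiteness of the double integral for every $p$, but it does not touch the hypothesis $p>\alpha$, which enters the theorem's proof structurally (e.g.\ through the Markov-inequality and three-series estimates inherited from Theorem~\ref{thm:rrep} and the summability of $\sum_k \Gamma_k^{-p/\alpha}$), not merely through the integrability of $f$.

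The missing step is a one-liner, and it is the same device the paper uses at the end of the proof of Theorem~\ref{lem:first_pp}. Since $f$ is bounded, the integrability hypothesis holds for \emph{every} exponent, so apply Theorem~\ref{lem:LPpaths} with any fixed $p'>\alpha$ (such a $p'$ exists because $\alpha<2$); this makes $U$ an $L^{p'}([0,1]^d)$-valued random variable. Because $[0,1]^d$ has Lebesgue measure one, H\"older's inequality gives $\Vert u\Vert_{L^p}\le \Vert u\Vert_{L^{p'}}$ for $p\le p'$, so the inclusion $L^{p'}([0,1]^d)\hookrightarrow L^p([0,1]^d)$ is continuous, hence Borel measurable, and composing $U$ with it shows that $U$ is an $L^p([0,1]^d)$-valued random variable also for $1\le p\le\alpha$. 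With that addition your argument is complete and coincides with the paper's intended one.
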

 {
 The space $L^p([0,1]^d)$ is not an ideal sample space, since the topology of 
$L^p([0,1]^d)$ is too weak to  guarantee measurability of the  pointwise values  $U(x)$ for fixed $x\in[0,1]^d$. Since we already know that pointwise values are well-defined, we next study how  regular the sample paths can be.} {It is well-known that sample paths of $U(x)=\int f(x,x') M(dx')$ have discontinuities. Indeed, 
 a necessary condition for almost surely sample path continuity at $x$  is that $x'\mapsto f(x,x')$ is 
 continuous at $x$ (see Chapter 10.3 in \cite{TS94}), which clearly does not hold in our case.}

 Due to this we refine the analysis on the regularity of  stable random sheets  $U$ by considering Sobolev spaces.  We give a quick recap on $L^p$-based Sobolev spaces.  Recall, that the 
Schwartz space  $\mathcal S(\mathbb R^d)$ consists of all rapidly decreasing 
 smooth  functions. That is, 
 \begin{equation*}
 \mathcal S(\mathbb R^d) = \left\{ \lambda \in C^\infty (\mathbb R^d) : 
    \sup_{x\in\mathbb R^d }|  x^\alpha  D^\beta  \lambda (x) | <\infty  
    \text { for all multi-indices } \alpha, \beta \right \},
 \end{equation*}
where notations $x^\alpha= x^{\alpha_1}\cdots x^{\alpha_d}$ and 
$D^\beta =\partial_ 1 ^{\beta_1}\cdots \partial _d ^{\beta_d} $,  
$\alpha=(\alpha_1,\dots,\alpha_d), \beta=(\beta_1,\dots,\beta_d)$, are used with non-negative 
integers $\alpha_k, \beta_k$.
The space $\mathcal S(\mathbb R^d)$ is equipped with the seminorms 
$|\lambda  |_{\alpha,\beta} =   \sup_{x\in\mathbb R^d }|  x^\alpha  D^\beta  \lambda (x) | $, 
$\alpha,\beta \in  (\{0\}\cup \mathbb N)^d$.

The dual space of $\mathcal S(\mathbb R^d)$ is the space of tempered distributions 
$\mathcal S'(\mathbb R^d)$. The Fourier transform $\mathcal F( u)$  of a tempered distribution $u$  is defined by
\begin{equation*}
\langle \mathcal F ( u),  \overline{\lambda }\rangle =  (2\pi)^d \langle u, \overline {\mathcal F^{-1} (\lambda )} \rangle,
\end{equation*}
for all $\lambda \in \mathcal S(\mathbb R^d)$, where 
\begin{equation*}
\mathcal F( \lambda ) (\xi)=\int \exp \left( - i \xi \cdot x\right) \lambda (x) dx, \mathcal 
F ^{-1} (\lambda )(x) = \frac{1}{(2\pi)^d} \int \exp \left( - i \xi \cdot x\right) \lambda (x) dx,
\end{equation*}
and  $\overline{z} $ denotes the complex conjugate of $z\in\mathbb C$.  Let $1< p <\infty$ and $s\in\mathbb R$. The fractional order Sobolev  space $H^s_p$ consists of 
 the tempered distributions $u$ that satisfy
 \begin{equation}\label{eqn:sobolev_def}
 \mathcal F^{-1} \left( (1+|\xi|^2)^{\frac{s}{2}} \mathcal F(u) (\xi)\right) \in L^p(\mathbb R^d).
 \end{equation}
 and it is equipped with the norm  $\Vert  \mathcal F^{-1}\left( (1+|\xi|^2)^{\frac{s}{2}} \mathcal F(u) (\xi)\right)\Vert _{L^p}$. {To the best of our best knowledge, the following corollary is new given the refined analysis on the regularity}.
 \begin{cor} \label{thm:sobolev}
   Let  $s\geq 0, \  2\leq p<\infty $,  and  $f:\mathbb R^d\times E \rightarrow \mathbb C $ be Borel measurable. 
   
   If
$ \int_E \Vert  f(\cdot,x')\Vert_{H^s_p}^p dx'  <\infty $, then 
the symmetric stable random field
 \begin{equation}\label{eqn:series2}
U(x)=(C_\alpha |E|)^{{1}/{\alpha}} \sum_{k=1}^\infty \rho _k  \Gamma_k^{-{1}/{\alpha}} f(x, V_k),
  \end{equation}
 is an $ H^{s}_p$-valued random variable.
\end{cor}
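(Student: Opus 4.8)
The plan is to reduce the statement to the $L^p$-convergence result of Theorem~\ref{lem:LPpaths} through the Bessel potential operator. Write $J^s u := \mathcal F^{-1}\big((1+|\xi|^2)^{s/2}\,\mathcal F(u)\big)$. By the very definition \eqref{eqn:sobolev_def} of the $H^s_p$-norm, $J^s$ is a linear isometry of $H^s_p$ onto $L^p(\mathbb R^d)$ with isometric inverse $J^{-s}$; in particular $\|u\|_{H^s_p}=\|J^s u\|_{L^p}$ for every $u\in H^s_p$. The idea is to apply $J^s$ termwise to \eqref{eqn:series2} and recognise the outcome as a stable series already covered by Theorem~\ref{lem:LPpaths}.

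First I would set $g(x,x'):=\big(J^s f(\cdot,x')\big)(x)$, i.e.\ $J^s$ applied in the first variable for each fixed $x'$. Since $\int_E\|f(\cdot,x')\|_{H^s_p}^p\,dx'<\infty$, we have $f(\cdot,x')\in H^s_p$ for a.e.\ $x'$, hence $g(\cdot,x')\in L^p(\mathbb R^d)$ a.e., and the isometry gives
\begin{equation*}
\int_E\int_{\mathbb R^d}|g(x,x')|^p\,dx\,dx'=\int_E\|g(\cdot,x')\|_{L^p}^p\,dx'=\int_E\|f(\cdot,x')\|_{H^s_p}^p\,dx'<\infty,
\end{equation*}
so $g$ meets the integrability hypothesis of Theorem~\ref{lem:LPpaths} with $T=\mathbb R^d$.

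The main obstacle is the measurability of $g$, since $J^s$ corresponds to the multiplier $(1+|\xi|^2)^{s/2}$, which grows at infinity and so is not a convolution against an integrable kernel. I would establish it through strong (Bochner) measurability: because $H^s_p$ is separable, Pettis' theorem reduces the measurability of $x'\mapsto f(\cdot,x')\in H^s_p$ to weak measurability, and the latter follows by testing against the dense family $\mathcal S(\mathbb R^d)\subset (H^s_p)'=H^{-s}_q$ (with $p^{-1}+q^{-1}=1$), for which $x'\mapsto\int f(x,x')\phi(x)\,dx$ is measurable by Fubini, the finiteness $\|f(\cdot,x')\|_{H^s_p}<\infty$ a.e.\ allowing one to pass from the dense subset to all functionals. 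Composing the strongly measurable map $x'\mapsto f(\cdot,x')$ with the continuous operator $J^s$ yields a strongly measurable map $x'\mapsto g(\cdot,x')\in L^p(\mathbb R^d)$; approximating it by simple $L^p$-valued functions produces a jointly measurable representative of $g$, which is exactly what Theorem~\ref{lem:LPpaths} requires.

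With this in place the conclusion follows quickly. Theorem~\ref{lem:LPpaths} applied to $g$ shows that
\begin{equation*}
V(x)=(C_\alpha|E|)^{1/\alpha}\sum_{k=1}^\infty\rho_k\Gamma_k^{-1/\alpha}g(x,V_k)
\end{equation*}
converges almost surely in $L^p(\mathbb R^d)$ to a symmetric stable $L^p$-valued random variable. As $J^s$ is linear and commutes with finite sums, the truncations $U_N$ of \eqref{eqn:series2} satisfy $J^sU_N=V_N$, so by the isometry $\|U_N-U_M\|_{H^s_p}=\|V_N-V_M\|_{L^p}$; hence $(U_N)$ is almost surely Cauchy in $H^s_p$ and converges to $U=J^{-s}V$. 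Weak measurability of $U$, and therefore measurability by separability of $H^s_p$ together with Remark~\ref{remark:pp}, follows exactly as in the proof of Theorem~\ref{lem:LPpaths}. Thus $U$ is an $H^s_p$-valued random variable.
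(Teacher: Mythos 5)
Your proof is correct, and its core reduction coincides with the paper's: both arguments apply Theorem~\ref{lem:LPpaths} to the Bessel-transformed kernel $g(x,x')=\mathcal F^{-1}\bigl((1+|\xi|^2)^{s/2}\mathcal F f(\cdot,x')\bigr)(x)$, whose $p$-th power integrability over $\mathbb R^d\times E$ is exactly the hypothesis $\int_E\Vert f(\cdot,x')\Vert_{H^s_p}^p\,dx'<\infty$. Where you genuinely differ is in how $J^s$ is exchanged with the random series. The paper performs the exchange on the infinite sum: it passes to tempered distributions, moves $\mathcal F$ inside the sum by continuity on $\mathcal S'(\mathbb R^d)$, asserts $L^q$-convergence of the transformed series \eqref{eqn:series3} via Hausdorff--Young, multiplies by the weight $(1+|\xi|^2)^{s/2}$, and inverts. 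You instead apply $J^s$ only to finite partial sums, where linearity suffices, and use the fact that $J^s$ is an isometry of $H^s_p$ onto $L^p$ to transfer the almost sure Cauchy property of $V_N$ in $L^p$ back to $U_N$ in $H^s_p$; completeness then finishes the argument. Your route buys two things: (i) it avoids the intermediate claim that $\mathcal F U$ converges in $L^q$, a step that is delicate in the paper, since invoking Theorem~\ref{lem:LPpaths} with exponent $q=p/(p-1)\le 2$ would require $q>\alpha$ and an $L^q$-integrability hypothesis on $\mathcal F f(\cdot,x')$, neither of which is available in general; and (ii) it explicitly settles the joint Borel measurability of $g$ (via Pettis, composition with the continuous operator $J^s$, and selection of a jointly measurable representative), which is an actual hypothesis of Theorem~\ref{lem:LPpaths} that the paper leaves implicit. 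What the paper's Fourier-side route buys in exchange is the explicit series representation of $\mathcal F U$ in $\mathcal S'(\mathbb R^d)$, which is precisely the device reused in the proof of Theorem~\ref{thm:pathsob}, where the transforms of the indicator terms are computed in closed form. One small point to make explicit in your write-up: the jointly measurable representative of $g$ agrees with $J^s f(\cdot,x')$ in $L^p$ only for almost every $x'$, so the identity $J^s U_N=V_N$ holds almost surely because the $V_k$ are uniformly distributed on $E$ and hence avoid the exceptional null set.
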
 
\begin{proof}
It is well-known that  $H^s_p$  is a subset of  $L^p$. Hence, 
$ \int _E \int  |f(x,x')|^p dx  dx' <\infty$.  Then  $U$ is $L^p$-valued random variable by Theorem  \ref{lem:LPpaths}. 
 
 Note that convergence of the series representation  \eqref{eqn:series2}  in $L^p(\mathbb R^d)$ implies  its  convergence in the space of tempered  distributions   $\mathcal S' (\mathbb R^d)$. Hence, we may change the order of  Fourier transform and   sum by the continuity of  Fourier transform on 
 $\mathcal S'(\mathbb R^d)$
Then 
\begin{equation} \label{eqn:series3}
\mathcal F U = 
(C_\alpha |E|)^{1/\alpha} \sum_{k=1}^\infty \rho_k \Gamma_k ^{-{1}/{\alpha}}  
 \mathcal F f(\cdot, V_k), 
 \end{equation}
 which by Theorem \ref{lem:LPpaths} converges in $L^q(\mathbb R^d)$ when $1/q+1/p=1$, because   the  Fourier transform maps  continuously from $L ^q $ into 
 $L^p$  by Hausdorff-Young inequality. Multiplying the $L^q$-function 
 \eqref{eqn:series3} with 
 $(1+|\xi|^2)^{s/2}$ produces a tempered distributions, whose 
 inverse Fourier transform is by continuity 
 \begin{equation} \label{eqn:series4}
\mathcal F^{-1} ((1+|\xi|^2)^\frac{s}{2} \mathcal F U) = 
(C_\alpha |E|)^{1/\alpha} \sum_{k=1}^\infty \rho_k \Gamma_k ^{-{1}/{\alpha}}  
\mathcal F^{-1 } (1+|\xi|^2)^{-\frac{s}{2}}\mathcal F f(\cdot, V_k),
 \end{equation}
in the space of tempered distributions.  But now
 the random series \eqref{eqn:series4} converges also in $L^p(\mathbb R^d)$ by  Theorem
 \ref{lem:LPpaths} and   Hausdorff-Young inequality applied for $\mathcal F^{-1}$.
\end{proof}

\begin{remark}
{As a consequence of Corollary \ref{thm:sobolev}, we have 
a function space formulation of symmetric stable random fields
that also cover pointwise values. Indeed, 
the pointwise evaluation $ U(x)$  are random variables 
 whenever $ U\in H^s_p(\mathbb R^d)$, where  $s> d/2$
 This fact follows  by writing the  pointwise evaluation  with the help of 
 Dirac delta function  $U(x)=\langle \delta_x, U\rangle_{H^{-s}_q , H^s_p}$, since 
 the dual space of $H^s_p(\mathbb R^d)$ is $H^{-s}_q(\mathbb R^d)$ and 
 $\delta _x \subset 
  H^{-d+d/p}_{p,}$. 
 Moreover, $$
 \Vert  \mathcal F^{-1} (1+|\xi|^2)^{-\frac{s}{2}}\exp( -i x \cdot \xi ) \Vert _{L^2 }\leq 
C\left( \int  \left|  (1+|\xi|^2)^{-\frac{s}{2}} \exp( -i x \cdot \xi ) \right| ^2 d\xi  \right)^\frac{1}{2}
<\infty,
  $$
  whenever $s>d/2$. 
for mappings $\langle \delta_x, U \rangle_{H^{-s}, H^s} : f\mapsto  f(x)$
that  are continuous. 
}
\end{remark}

{We now aim to extend all sample 
 paths  on $[0,1]^d$ to sample paths on $\mathbb R^d$ by zero extensions. That is 
 \begin{equation*}
 U^e(x)=\begin{cases}
  U(x),   \text{ if }x\in[0,1]^d\\
  0  \text{ otherwise.}
 \end{cases}
 \end{equation*}
We can establish this by multiplying the function $f(x,x')$ with $1_{[0,1]^d}(x)$.   }

\begin{thm}\label{thm:pathsob}
Let $2\leq p<\infty$.  
The $\alpha$-stable random sheet $U$ on $[0,1]^d$ has zero extension  
onto $\mathbb R^d$, which  is $H^s_p$-valued random variable
  for all  $ s <  \frac{1}{p}$.  
\end{thm}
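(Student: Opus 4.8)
The plan is to reduce the statement to a single application of Corollary \ref{thm:sobolev}. As the paragraph preceding the theorem indicates, the zero extension of the sheet amounts to replacing the kernel $f$ of \eqref{eqn:sheet_f} by $\tilde f(x,x') = 1_{[0,1]^d}(x)\, f(x,x')$. For fixed $x'\in[0,1]^d$ the section $x\mapsto \tilde f(x,x')$ is exactly the indicator of the box $R_{x'}=\prod_{i=1}^d [x_i',1]$, so by Corollary \ref{thm:sobolev} (whose hypotheses require $2\le p<\infty$, with $E=[0,1]^d$) it is enough to verify that
$$
\int_{[0,1]^d} \| 1_{R_{x'}} \|_{H^s_p}^p \, dx' < \infty .
$$
Since $[0,1]^d$ has finite Lebesgue measure, this follows from the uniform bound $\sup_{x'\in[0,1]^d}\| 1_{R_{x'}}\|_{H^s_p}<\infty$, which is what I would aim to establish.

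Next I would isolate the one-dimensional building block. Writing $h_a = 1_{[a,1]}$ for $a\in[0,1]$, the box indicator factorizes as a tensor product $1_{R_{x'}}(x)=\prod_{i=1}^d h_{x_i'}(x_i)$. The core one-dimensional claim is that $h_a\in H^s_p(\mathbb{R})$ for every $s<1/p$, with $\|h_a\|_{H^s_p}$ bounded uniformly in $a\in[0,1]$. On the Fourier side $\widehat{h_a}(\xi)=(e^{-ia\xi}-e^{-i\xi})/(i\xi)$ decays like $|\xi|^{-1}$, and pairing this decay against the weight $(1+|\xi|^2)^{s/2}$ gives integrability precisely at the threshold $s<1/p$; the uniformity in $a$ is clear because translating and rescaling $h_a$ to $1_{[0,1-a]}$ only improves the estimate as the interval shrinks (the homogeneous seminorm scales like $(1-a)^{1/p-s}\to 0$ for $s<1/p$). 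Equivalently, one may invoke the known fact that indicators of bounded intervals lie in the Besov scale $B^s_{p,q}(\mathbb{R})$ for all $s<1/p$ and all $q$.

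Finally I would promote this to $\mathbb{R}^d$. For $p=2$ the passage is immediate: since $1+|\xi|^2\le \prod_{i=1}^d(1+\xi_i^2)$ and $s\ge 0$, one has $(1+|\xi|^2)^{s/2}\le \prod_i (1+\xi_i^2)^{s/2}$, whence Plancherel together with the product structure of $\widehat{1_{R_{x'}}}$ yields $\| 1_{R_{x'}}\|_{H^s_2}\le \prod_i \|h_{x_i'}\|_{H^s_2}$, a uniformly bounded quantity. For general $2\le p<\infty$ the same dominating-mixed-smoothness inequality is the organizing idea: the tensor product of the $h_{x_i'}$ has controlled norm in the Triebel--Lizorkin space of dominating mixed smoothness $S^s_{p,2}F(\mathbb{R}^d)$, equal to $\prod_i \|h_{x_i'}\|_{F^s_{p,2}}$, and $S^s_{p,2}F\hookrightarrow F^s_{p,2}=H^s_p$ for $s\ge 0$. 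Either route gives $\sup_{x'}\| 1_{R_{x'}}\|_{H^s_p}\le (\sup_{a\in[0,1]}\|h_a\|_{H^s_p})^d<\infty$, and the theorem follows.

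The main obstacle is exactly this last passage from one to $d$ dimensions when $p\ne 2$: the Bessel-potential norm does not split over tensor products, and the clean Fourier--Plancherel argument available at $p=2$ must be replaced either by the machinery of function spaces with dominating mixed smoothness or by a direct citation that characteristic functions of bounded Lipschitz domains (such as the boxes $R_{x'}$) belong to $H^s_p(\mathbb{R}^d)$ for all $s<1/p$. The one-dimensional $L^p$ estimate for $p\ne 2$ is the other mildly delicate point, since Plancherel is unavailable and one must instead rely on a difference or Besov characterization to reach the sharp threshold $s<1/p$.
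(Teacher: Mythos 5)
Your proposal is correct in its reduction and can be completed, but it organizes the key estimate genuinely differently from the paper. Both arguments share the same skeleton: pass to the LePage series with the extended kernel $\tilde f(x,x')=1_{[0,1]^d}(x)f(x,x')$, whose sections are the box indicators $1_{\prod_i[x_i',1]}$, and verify the integrability condition $\int_{[0,1]^d}\|1_{\prod_i[x_i',1]}\|_{H^s_p}^p\,dx'<\infty$ (the paper phrases this via Theorem \ref{lem:LPpaths} applied to the kernel $g(x,x')=\mathcal F^{-1}\bigl[(1+|\xi|^2)^{s/2}\mathcal F\,1_{\prod_i[x_i',1]}\bigr](x)$, which is exactly the hypothesis of Corollary \ref{thm:sobolev} that you invoke). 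The divergence is in how the $H^s_p$ norm of a box indicator is bounded. The paper never returns to physical space coordinate by coordinate: it applies the Hausdorff--Young inequality once in $\mathbb R^d$, so the task becomes bounding the explicit $L^q$ integral of $(1+|\xi|^2)^{sq/2}\prod_{\ell}\bigl|2\sin(\xi_\ell(1-x'_\ell)/2)/\xi_\ell\bigr|^q$; Jordan's inequality and a splitting of each coordinate into $|\xi_\ell|\le 1$ and $|\xi_\ell|>1$ reduce this to products of one-dimensional weights $(1+\xi_\ell^2)^{(sq-q)/2}$, which Fubini factorizes, giving finiteness precisely when $sq-q<-1$, i.e.\ $s<1/p$, uniformly in $x'$ (the case $p=2$ via Plancherel). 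Your route instead establishes the one-dimensional fact first and then tensorizes on the $L^p$ side, which is exactly why you hit your declared ``main obstacle'': the Bessel-potential norm does not split over tensor products for $p\neq 2$, forcing you to import either the dominating-mixed-smoothness embedding $S^s_{p,2}F\hookrightarrow F^s_{p,2}=H^s_p$ (true for $s\ge 0$, via a Marcinkiewicz-type multiplier theorem applied to $(1+|\xi|^2)^{s/2}\prod_i(1+\xi_i^2)^{-s/2}$) or a citation that indicators of bounded Lipschitz domains lie in $B^{1/p}_{p,\infty}\hookrightarrow H^s_p$ for $s<1/p$, $p\ge 2$. Both patches are legitimate, so your proof closes; but the obstacle is an artifact of the order of operations --- performing Hausdorff--Young \emph{before} any tensor decomposition, as the paper does, lets the product structure survive trivially on the Fourier side and keeps the whole argument elementary, self-contained, and manifestly uniform in $x'$. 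What your route buys in exchange is generality and reusability: the one-dimensional building block plus the mixed-smoothness or Lipschitz-domain embedding applies verbatim to indicators of much more general sets than the boxes arising here.
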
 
 
 \begin{proof}
 We  denote the  standard basis vectors of $\mathbb R^d$ with $e_1,\dots, e_d$.
   The Fourier transform of 
 the individual term   $1_{[0,x_\ell]}(V_k \cdot e_\ell)  =1_{(V_k\cdot e_\ell ,1]}(x_\ell)$, $\ell\in\{1,\dots, d\}$ with 
 respect to $x_\ell$  is  
 $\frac{\exp(- i\xi_\ell ) - \exp(i \xi_\ell  V_k \cdot e_\ell )}{-i \xi_\ell}$, which we insert 
 into the   Fourier transform  
 \begin{equation}\label{eqn:series5}
  \mathcal FU ^e (\xi)=  (C_\alpha |E|)^{1/\alpha} \sum_{k=1}^\infty \rho_k \Gamma_k^{-\frac{1}{\alpha}} 
 \prod _{\ell=1}^d  \frac{\exp(- i\xi_\ell ) - \exp(i \xi_\ell  V_k \cdot e_\ell)}{-i \xi_\ell},
 \end{equation}
 in $\mathcal S'(\mathbb R^d)$. 
According to Theorem \ref{lem:LPpaths}, the convergence in $L^p$  of 
\eqref{eqn:sobolev_def}, that is, 
\begin{equation*}
\begin{split}
\mathcal F^{-1} \left(  (1+|\xi|^2)^\frac{s}{2} \sum_{k=1}^\infty \rho_k \Gamma_k^{-\frac{1}{\alpha}}  \prod_{\ell=1}^d  
  \frac{\exp(- i\xi_\ell ) - \exp(i\xi_\ell  V_k \cdot e_\ell)}{-i \xi_\ell}\right)=\\
    \sum_{k=1}^\infty \rho_k \Gamma_k^{-\frac{1}{\alpha}} 
  \mathcal F^{-1}\left(  (1+|\xi|^2)^\frac{s}{2} 
 \prod_{\ell=1}^d   \frac{\exp(- i\xi_\ell ) - \exp(i \xi_\ell  V_k \cdot e_\ell)}{-i \xi_\ell}\right),
\end{split}
\end{equation*}
 depends on behaviour of the function
 \begin{equation*}
g(x,x'):= \mathcal F^{-1} \left( (1+ |\xi|^2 ) ^\frac{s}{2} \prod_{\ell=1}^d \frac{\exp(- i\xi_\ell ) - \exp(i \xi_\ell  x ' \cdot e_\ell  )}{-i \xi_\ell}\right) (x),
 \end{equation*}
 and we need to show that 
\begin{equation*} 
\int_{\mathbb R^d } \int_E  |g(x,x')|^p dx' dx  <\infty. 
\end{equation*}
Assume first that $2<p<\infty$.
By Hausdorff-Young inequality, the  inverse Fourier transform $\mathcal F^{-1}:L^{q}(\mathbb  R^d)\rightarrow 
L^p(\mathbb R^d)$ continuously for $2<p<\infty$ and $q^{-1}+p^{-1}=1$. Hence,
\begin{equation} \label{eqn:fur1}
  \left(\int |g(x,x')|^p d x \right) ^\frac{1}{p}  \leq C  
 \left( \int 
 (1+ |\xi|^2 ) ^\frac{sq}{2} \prod_{\ell=1}^d \left\vert  \frac{\exp(- i\xi_\ell ) - \exp(i \xi_\ell  x'
 \cdot e_\ell  )}{-i \xi_\ell}\right\vert ^q d\xi \right) ^\frac{1}{q}. 
\end{equation}
We will examine each coordinate $\xi_\ell$ for  values $|\xi |\leq 1 $ and $|\xi_\ell|>1$. 
Application of  Jordan's inequality produces estimate 
\begin{equation}\label{eqn:fur2}
\begin{split}
\prod_{\ell=1}^d\left\vert  \frac{\exp(- i\xi_\ell ) - \exp(i \xi_\ell  x'\cdot e_\ell  )}{-i \xi_\ell}\right\vert 
=  &\prod_{\ell=1}^d  \frac{ \left\vert  2\sin(\xi_\ell (1-x' \cdot e_\ell)/2)\right\vert } { |\xi_\ell |}
\\
\leq &
\prod_{\ell=1}^d  \left(  |1-s\cdot e_ \ell |     1_{|\xi_\ell | \leq 1}  +  \frac{2}{|\xi_\ell |} 1_{|\xi_\ell |>1}
\right),
\end{split}
\end{equation}
where   $x' \in[0,1]^d$ are uniformly  bounded.   We will obtain finite sum of products of the type 
\begin{equation}\label{eqn:fur3}
C(1+|\xi|^2)^\frac{sq}{2}\left( \prod_{\ell \in I_1  }    1_{|\xi_\ell | \leq 1} \right)  \left( \prod_{\ell\in I_2}  \frac{1}{|\xi_\ell |^q} 1_{|\xi_\ell |>1}
\right),
\end{equation}
where $I_1, I_2$ are disjoint sets such that $I_1\cup I_2=\{1,\dots,d\}$. 
Then the  products of denominators
\begin{equation*}
\prod_{\ell\in I_2 }  \frac{1}{|\xi_\ell| } 1_{|\xi_\ell|>1} 
 \leq  \prod_{\ell\in I_2 } \frac{\sqrt{2}}{(1+|\xi_\ell |^2)^\frac{1}{2}}.
\end{equation*}

Moreover,
\begin{equation*}
\begin{split}
(1+|\xi|^2)^\frac{s}{2} \prod_{\ell\in I_1}    1_{|\xi_\ell | \leq 1}  
&\leq  2^{sd/2} (1+\sum_{\ell\in I_2}|\xi_\ell|^2)^\frac{s}{2}  \prod_{\ell\in I_1}    1_{|\xi_\ell | \leq 1}
\\ &\leq C  \prod_{\ell\in I_2} (1+ |\xi_\ell|^2)^\frac{s}{2} \prod_{\ell\in I_1}    1_{|\xi_\ell | \leq 1}.
\end{split}
\end{equation*}
 The upper bound for the integrand consists of finite  sum of products of the form 
\begin{equation*}
C \left( \prod_{\ell\in I_2} (|1+|\xi_{\ell}|^2| )^\frac{sq-q}{2} \right) 
\left( \prod_{\ell\in I_1 }1_{|\xi_\ell| \leq 1}\right),
\end{equation*}
which have finite integrals  when $sq - q < -1  \Leftrightarrow s < 1/p$, where $2< p<\infty$.
When $p=2$, we use  continuity of the Fourier transform and inverse Fourier transform on
$L^2(\mathbb R^d)$.
 \end{proof}
We emphasize that continuity of pointwise values do not hold in this topology due to jumps which can arise. For later purposes, we point out a refinement  of the convergence  in 
Theorem \ref{lem:LPpaths}. We formulate this result in a form of a lemma.
\begin{lem}\label{lem:uniformser}
Let $B\subset L^p(T)$ be a bounded set, where  $p>\max(1,\alpha)$.   Then the  limit  
of conditional expectations 
\begin{equation*}
\lim _{m\rightarrow \infty }\mathbb E \left [   \left \Vert \sum_{k=m }^\infty \rho_k \Gamma_k^{-1/\alpha} f(x,V_k)  \right  \Vert  \bigg |  \{\Gamma_k\}, \{ V_k\}  \bigg | \right ] =0,
 \end{equation*}
in $L^p(T)$ is  uniform with  respect to  $f\in B$.  
\end{lem}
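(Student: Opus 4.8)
The plan is to upgrade the conditional moment bounds already derived in the proof of Theorem~\ref{lem:LPpaths}, exploiting the observation that there the integrand $f$ enters \emph{only} through the $L^p(T)$-norms of its slices $f(\cdot,V_k^0)$. Throughout I would condition on a fixed realization $\Gamma_k=\Gamma_k^0$, $V_k=V_k^0$, so that the remaining randomness is carried entirely by the symmetric signs $\rho_k$, to which Khinchine's inequality applies. Write $\gamma=\min(2,p)$ and set $M_B:=\sup_{f\in B}\operatorname{ess\,sup}_{x'\in E}\Vert f(\cdot,x')\Vert_{L^p(T)}$, which is finite precisely because $B$ is a bounded family of integrands.

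First I would reproduce the two case estimates from Theorem~\ref{lem:LPpaths}. For $1<p\le 2$, Jensen's inequality, the conditional second moment of the Rademacher sum, and the elementary bound $(\sum_k a_k)^{p/2}\le\sum_k a_k^{p/2}$ (valid since $p/2\le 1$) give
\begin{equation*}
\mathbb E\left[\left\Vert \sum_{k=m}^\infty \rho_k (\Gamma_k^0)^{-1/\alpha} f(\cdot,V_k^0)\right\Vert_{L^p(T)} \,\Big|\, \{\Gamma_k\},\{V_k\}\right]^{p} \le \sum_{k=m}^\infty (\Gamma_k^0)^{-p/\alpha}\Vert f(\cdot,V_k^0)\Vert_{L^p(T)}^{p}.
\end{equation*}
For $p>2$, Jensen's inequality together with Khinchine's inequality and Minkowski's inequality with index $p/2>1$ give
\begin{equation*}
\mathbb E\left[\left\Vert \sum_{k=m}^\infty \rho_k (\Gamma_k^0)^{-1/\alpha} f(\cdot,V_k^0)\right\Vert_{L^p(T)} \,\Big|\, \{\Gamma_k\},\{V_k\}\right]^{2} \le C\sum_{k=m}^\infty (\Gamma_k^0)^{-2/\alpha}\Vert f(\cdot,V_k^0)\Vert_{L^p(T)}^{2},
\end{equation*}
where $C=C(p)$ is the Khinchine constant and depends neither on $m$ nor on $f$. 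In both cases the conditional expectation is dominated by $C\big(\sum_{k=m}^\infty (\Gamma_k^0)^{-\gamma/\alpha}\Vert f(\cdot,V_k^0)\Vert_{L^p(T)}^{\gamma}\big)^{1/\gamma}$.

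The decisive step for uniformity is that $f$ now appears only through the slice norms. Bounding $\Vert f(\cdot,V_k^0)\Vert_{L^p(T)}^{\gamma}\le M_B^{\gamma}$ uniformly in $k$ and in $f\in B$ factors these out and leaves
\begin{equation*}
\sup_{f\in B}\ \mathbb E\left[\left\Vert \sum_{k=m}^\infty \rho_k (\Gamma_k^0)^{-1/\alpha} f(\cdot,V_k^0)\right\Vert_{L^p(T)} \,\Big|\, \{\Gamma_k\},\{V_k\}\right] \le C\, M_B \left(\sum_{k=m}^\infty (\Gamma_k^0)^{-\gamma/\alpha}\right)^{1/\gamma}.
\end{equation*}
The right-hand side is independent of $f$. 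Since $\gamma/\alpha>1$ — indeed $\gamma=p>\alpha$ when $p\le 2$ and $\gamma=2>\alpha$ when $p>2$ — Lemma~\ref{lem:poisson} ensures that the series $\sum_{k=1}^\infty (\Gamma_k^0)^{-\gamma/\alpha}$ converges for almost every realization, so its tail tends to $0$ as $m\to\infty$. This delivers the claimed uniform limit.

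I expect the only genuinely delicate point to be the bookkeeping that isolates the entire $f$-dependence into the single constant $M_B$: one must check that the Khinchine constant depends on $p$ alone and that, after conditioning, the residual tail $\sum_{k\ge m}(\Gamma_k^0)^{-\gamma/\alpha}$ is a deterministic quantity (for the fixed realization) free of $f$. It is also worth flagging that the \emph{conditional} uniformity really requires the slices $f(\cdot,x')$ to be uniformly bounded in $L^p(T)$ over $x'\in E$, not merely an integrated bound on $\int_E\Vert f(\cdot,x')\Vert_{L^p(T)}^p\,dx'$; for the stable sheet this holds trivially since $f(\cdot,x')$ is an indicator with $\Vert f(\cdot,x')\Vert_{L^p(T)}\le |T|^{1/p}$. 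Everything else is a direct transcription of the moment estimates established in Theorem~\ref{lem:LPpaths}.
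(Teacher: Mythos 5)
Your proof is correct and takes essentially the same route as the paper: the paper's own proof simply says to proceed exactly as in the proof of Theorem \ref{lem:LPpaths} and observe that the resulting Cauchy estimates are uniform over $f\in B$, which is precisely what you carry out in detail — the two case estimates ($1<p\le 2$ via Jensen and the conditional second moment, $p>2$ via Khinchine and Minkowski), the isolation of the $f$-dependence into the slice norms bounded by $M_B$, and the tail $\sum_{k\ge m}(\Gamma_k^0)^{-\gamma/\alpha}$ controlled by Lemma \ref{lem:poisson}. Your flag that the hypothesis must be read as a uniform $L^p(T)$-bound on the slices $f(\cdot,x')$ is a faithful reading of the lemma's (somewhat loosely stated) hypothesis $f\in B$, and it holds in the paper's application where the slices are differences of indicator functions.
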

\begin{proof}
We proceed exactly as in the proof of  Theorem \ref{lem:LPpaths}, which  indicates that the corresponding Cauchy property is  uniform with respect to $f\in B$. Hence the convergence is uniform. 
\end{proof}

\section{Posterior analysis}
\label{sec:conv}

In this section, we will fix  the  formulation of the Bayesian inverse problem with $\alpha$-stable sheets as priors. We will proceed by aiming to approximate the $\alpha$-stable sheets with finite dimensional random sheets. This will lead to us showing that the posterior distributions are consistent with respect to an increasing dimensionality of the approximations of the prior.

\subsection{The setup of the Bayesian inverse  problem}

Let $G$ and $\widetilde G$ be separable  Banach spaces and let the forward model  be
$K:F\times \widetilde G\rightarrow G$, where   the space $F$, where the prior lives, is   either  $L^p(T)$, where $T=[0,1]^d$ $1\leq p <\infty$  and  $\alpha <p$   or ${H^{s}_p(T)}$,  where $2\leq p<\infty$, $0< s <\frac{1}{p}$.  The noisy observation of the unknown $U$ is modelled as 
$Y=K(U,\eta)$, where $\widetilde G$-valued random variable $\eta $ represents noise, which we take to be statistically independent of $U$.   

From the posterior distribution 
\begin{equation*} 
\mu^y \propto \exp(-\Phi(u,y)) \mu(du),
\end{equation*}
the prior distribution $\mu$ is taken to be the distribution of an $\alpha$-stable random sheet $U$
on $F$  and  the NDLL is defined as
 \begin{equation*}
 \Phi(u,y)= -\ln \left(  \frac{d\mu_{K(u,\eta)}}{d\nu}(u)\right),
 \end{equation*}
  for some distribution $\nu$ on $G$.  Now if we take $G,\widetilde G=\mathbb R^k$ and 
  $
  K(u,\eta )= Lu +\eta,
  $
  with $\eta \sim N(0,\Sigma)$ on $\mathbb R^k$ and $L: F\rightarrow \mathbb R^k$  a continuous linear 
  mapping. Then 
  \begin{equation*}
\mu^y \propto  \exp\left (-\frac{1}{2} | y- Lu|^2_\Sigma \right) \mu(du),  
    \end{equation*}
where the norm $| \cdot |_\Sigma = | \Sigma^{-\frac{1}{2}} \cdot | $.  In this case,  $\Phi$ is bounded on bounded subsets of $F\times G$, continuous on $F\times G$ and uniformly continuous as a function of $y$  with respect to $u$ in bounded subsets of $F$. All normalizing constants are bounded, since 
$\Phi$ is non-negative.  Hence $\Phi$ satisfies Conditions WD1, WD2, WP1, WP2 and WP3 with 
$D_\mu=\mathbb R^k$ and the posterior distribution  $\mu^y$ is well-posed on $\mathbb R^k$ in  total variation metric by Theorem \ref{thm:TV}.

\subsection{Approximations of the prior}

Up until now we have discussed ways of representing $\alpha$-stable sheets
\begin{equation}\label{eqn:prior}
U(x) = (C_\alpha) ^{1/\alpha} \sum_{k=1}^\infty 
\rho_k \Gamma_k^{-1/\alpha} f(x,V_k), 
\end{equation}
where $x\in[0,1]^d$.  We now wish to construct a numerically applicable  discretized version which can be applied e.g.\ in MCMC sampling.

A natural discretization of \eqref{eqn:prior}  arises by using  a random walk type approximation with 
independent increments.    We consider  a uniform grid $\{ x=mh :  m\in \{0,\dots, N\}^d\}$,  $h=1/N$ and $N\in \mathbb N$.   We equip the index set 
$\{m\in \{0,\dots, N\}^d\}$ with partial order $\leq  $  defined by   $(k_1\dots, k_d)\leq  (m_1,\dots, m_d)$ if and only if $k_i\leq  m_i$ for all $i=1,\dots,d$. Similarly, we set 
$(k_1\dots, k_d)< (m_1,\dots, m_d)$ if and only if $k_i<  m_i$ for all $i=1,\dots,d$.
 
 The discretization of $U$ is based on representing the
 values of $U$ at gridpoints as sums of independent increments, namely
\begin{equation}
\label{eqn:approximation}
U(hm)= \sum_{0<  n \leq m} \int 1_{C_{n}}(x) M( dx),
\end{equation}
where $C_n$ are  hypercubes, which are the translations of $(0,h]^d$ by gridpoints $h(n-1)$, where
 $n-1$ has components $n_i-1$ for all $i=1,\dots,d$.  Each hypercube $C_n$ is  of Lebesgue measure $|C_n| = h^d$. Since the hypercubes $C_n$ are disjoint,
 the stochastic integrals $\int 1_{C_{n}}(x) M( dx)$ are independent (see Theorem 3.5.3 in \cite{TS94}). 

The discretization of  $U$ on $[0,1]^d$ is taken to be 
\begin{equation}\label{eqn:discrete}
U^N(x)=U(h \lceil x/h \rceil ),  
\end{equation}
 where the ceiling function $\lceil x \rceil =  \min \{ m\in \mathbb Z^d:  x_j \leq m_j , \; j=1,\dots, d\}$. 
  Notably $U^N$ has piecewise constant sample paths, whose values   can be expressed as a sum of  independent identically distributed symmetric $\alpha$-stable random variables
\begin{equation*}
U^N(x)= \begin{cases} 
\sum_{0< n \leq  \lceil x/h \rceil }  \int 1_{C_{n}}(x') M( dx') \text{ if } x\in (0,1]^d\\
0 \text{ otherwise}.
\end{cases}
\end{equation*}
  This makes it easy to express the probability density of $U^N$ for $\alpha=1$ and, consequently, generate samples of $U^N$ in MCMC methods. A useful way of describing the discretized random sheet is by replacing the sum of independent 
increments  with  difference equations. In particular we are primarily interested in discretized version of Cauchy random sheets. 
For simplicity, we will discuss the case $d=2$, where  values  $U(hm)=U(hm_1,hm_2)$ are 
represented by double sums 
\begin{equation}\label{eqn:dis2d}
U(hm_1,hm_2)= \sum_{n_1=1}^{m_1} \sum_{n_2=1}^{m_2} \int 1_{C_{(n_1,n_2)}}(x) M( dx).
\end{equation}
Taking  the differences of values \eqref{eqn:dis2d} with respect to the both coordinates  leads us to 
equations
\begin{align}
\label{eqn:cdp_dis}
 U(hm_1, hm_2) &- U(hm_1, h(m_2-1))- U (h(m_1-1), hm_2)   \\
 \nonumber
 &+ U (h(m_1-1),h(m_2-1) ) = \int 1_{C_{(m_1,m_2)}}(x) M( dx),
\end{align}
with zero boundary values on the coordinate axes. In Equation \eqref{eqn:cdp_dis},  the 
random variables $U(hm_1,hm_2)$  conditioned with random variables  $ U(hm_1, h(m_2-1),  U (h(m_1-1), hm_2),  $ and $(h(m_1-1),h(m_2-1)$ are  i.i.d.   with distribution $S_\alpha(h^{d/\alpha},0,0)$.  We mention that the difference equations  \eqref{eqn:cdp_dis} formally  have  a continuous counterpart 
\begin{equation*}
\partial_{x_1}\partial_{x_2}  U = W, 
\end{equation*}
where  Cauchy  noise  
$$
W :=  (C_\alpha) ^{{1}/{\alpha}} \sum_{k=1}^\infty 
\rho_k \Gamma_k^{-1/\alpha} \delta_{V_k},
$$
can be understood as a  distributional derivative of the 
Cauchy sheet.  However, we do not proceed in this direction, but focus on convergence of 
the approximations.

\subsection{$L^p$-convergence}

We now move to  the convergence analysis of \eqref{eqn:discrete} on $L^p([0,1]^d)$. Following on from Lemma \ref{lem:poisson} we are in a position to show convergence  of our approximations \eqref{eqn:discrete}. This is done through the following theorem.

\begin{thm}
\label{lem:first_pp}
Let $1\leq p<\infty$.  The approximations  $U^N(x)=U(h \lceil x/h \rceil )$ 
converge  to $ U $ on $L^p([0,1]^d)$ in distribution. 
\end{thm}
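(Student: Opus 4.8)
The plan is to prove the stronger statement that, for the single coupling furnished by the common randomization $\{\rho_k,\Gamma_k,V_k\}$, one has $U^N\to U$ in probability in $L^p(T)$, $T=[0,1]^d$; since $L^p(T)$ is separable, convergence in probability implies convergence in distribution. Using the LePage representation \eqref{eqn:prior} together with \eqref{eqn:approximation}--\eqref{eqn:discrete}, I would write the difference as a single random series
\begin{equation*}
U(x)-U^N(x)=(C_\alpha)^{1/\alpha}\sum_{k=1}^\infty \rho_k\Gamma_k^{-1/\alpha}\,g_k^N(x),\qquad g_k^N(x):=f(x,V_k)-f(h\lceil x/h\rceil,V_k),
\end{equation*}
where $f$ is the corner kernel \eqref{eqn:sheet_f}. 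Each $g_k^N$ is an $L^p(T)$-valued random variable with $\|g_k^N\|_{L^p(T)}\le 2|T|^{1/p}$, so the kernels stay in a fixed bounded subset of $L^p(T)$ uniformly in $N$ and $k$.

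The first ingredient is the pointwise behaviour of the kernel. For fixed $x'\in[0,1]^d$ the map $x\mapsto f(x,x')=\prod_{i=1}^d 1_{\{x_i\ge x_i'\}}$ is locally constant off the measure-zero set $\bigcup_i\{x:x_i=x_i'\}$, and since $x_i\le h\lceil x_i/h\rceil< x_i+h$ for each $i$ we have $h\lceil x/h\rceil\to x$ as $N\to\infty$. Hence $f(h\lceil x/h\rceil,x')\to f(x,x')$ for a.e.\ $x$, and dominated convergence on the finite-measure domain $T$ gives $\|g_k^N\|_{L^p(T)}\to0$ as $N\to\infty$, for every value of $V_k$.

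The second ingredient controls the infinite sum. Conditioning on $\{\Gamma_k,V_k\}$ and repeating the Jensen/Khinchine/Minkowski estimates from the proof of Theorem \ref{lem:LPpaths} with $g_k^N$ in place of $f(\cdot,V_k)$ (treating the regimes $p=1$, $1<p\le2$ and $p>2$ exactly as there, the case $\alpha<1$ by absolute convergence), one obtains
\begin{equation*}
\mathbb E\!\left[\Big\|\sum_{k}\rho_k\Gamma_k^{-1/\alpha}g_k^N\Big\|_{L^p(T)}\;\Big|\;\{\Gamma_k,V_k\}\right]\le C\Big(\sum_k \Gamma_k^{-r/\alpha}\,\|g_k^N\|_{L^p(T)}^{r}\Big)^{1/r},
\end{equation*}
with $r\in\{1,p,2\}$ according to the regime. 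The series $\sum_k\Gamma_k^{-r/\alpha}$ converges almost surely by Lemma \ref{lem:poisson}, since $r/\alpha>1$ by the standing assumption $\alpha<p$ (and $\alpha<2$). Because $\|g_k^N\|_{L^p(T)}^r\to0$ for each fixed $k$ while remaining dominated by the summable sequence $(2|T|^{1/p})^r\,\Gamma_k^{-r/\alpha}$, dominated convergence over the index $k$ forces the right-hand side to $0$ as $N\to\infty$, for $\mu_{\{\Gamma_k,V_k\}}$-almost every sequence. This is precisely the uniform tail control recorded in Lemma \ref{lem:uniformser}.

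Finally I would remove the conditioning. The displayed conditional $L^1$-convergence gives conditional convergence in probability; writing $d_N:=\min(\|U-U^N\|_{L^p(T)},1)$ and using concavity of $t\mapsto\min(t,1)$, we get $\mathbb E[d_N\mid\{\Gamma_k,V_k\}]\le\min\big(\mathbb E[\|U-U^N\|_{L^p(T)}\mid\{\Gamma_k,V_k\}],1\big)\to0$ almost surely, so bounded convergence yields $\mathbb E[d_N]\to0$, i.e.\ $U^N\to U$ in probability and hence in distribution. The main obstacle is the rigorous interchange of the limit $N\to\infty$ with the infinite sum over $k$: this is exactly where the uniform tail estimate and the summability from Lemma \ref{lem:poisson} enter, so that the term-by-term argument is legitimate; the case split in $p$ and the a.e.\ kernel convergence are then routine.
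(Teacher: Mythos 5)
Your argument is, at its core, the paper's own proof in a different packaging: like the paper, you condition on $\{\Gamma_k\},\{V_k\}$, reuse the $p$-regime (Jensen/Khinchine/Minkowski) estimates from the proof of Theorem \ref{lem:LPpaths}, and control the series through the almost sure summability of $\sum_k \Gamma_k^{-r/\alpha}$ from Lemma \ref{lem:poisson}. The structural difference is that the paper tests against bounded Lipschitz functions and splits the series into a head $I_1(N)\to 0$ plus a tail $I_2(m)$ controlled uniformly in $N$ via Lemma \ref{lem:uniformser}, whereas you apply dominated convergence over the index $k$ to the bound $C\left(\sum_k \Gamma_k^{-r/\alpha}\Vert g_k^N\Vert_{L^p}^r\right)^{1/r}$. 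Both routes establish the same conditional $L^1$-convergence of $\Vert U^N-U\Vert_{L^p}$, and your packaging has the mild advantage of making explicit that the convergence holds in probability under the natural coupling, not merely in distribution.

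There is, however, one genuine gap: the range of exponents. The theorem is stated for every $1\leq p<\infty$ with no relation between $p$ and $\alpha$; the inequality $\alpha<p$ that you invoke as a ``standing assumption'' belongs to the setup of the Bayesian inverse problem in Section 4.1 (where the sample space $F$ is fixed), not to this statement. Your summability requirement $r/\alpha>1$ with $r\in\{1,p,2\}$ fails exactly in the regime $1\leq p\leq \alpha$: for instance, with $\alpha=3/2$ and $p=1$ you would need $\sum_k \Gamma_k^{-2/3}<\infty$, which is false since $\Gamma_k\sim k$ almost surely by Lemma \ref{lem:poisson}. The paper closes this case in one line: prove convergence in $L^s([0,1]^d)$ for some $s>\alpha$ (which your argument does cover) and then use the continuous embedding of $L^s([0,1]^d)$ into $L^p([0,1]^d)$ on the finite measure space $[0,1]^d$; composing with a continuous linear map preserves convergence in probability and in distribution. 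Adding this reduction makes your proof complete for the full range $1\leq p<\infty$.
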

\begin{proof}
For $0<\alpha <1$, the summands in 
\begin{equation}
\label{eqn:sum}
U^N(x)=  C_\alpha ^{1/\alpha} \sum_{k=1}^\infty  \rho_k \Gamma_k^{- 1/\alpha}   1_{[V_k\cdot e_1,1 ]\times \cdots\times  [V_k\cdot e_d,1 ] }(h  \lceil x/h \rceil  ),
\end{equation}
are bounded by  $\Gamma_{k}^{-1/\alpha }$, which are almost surely summable by Lemma \ref{lem:poisson}.   By dominated convergence,  we may take the limit inside the $L^p$-norm and obtain
 \begin{equation*}
 \begin{split}
&\lim_{N\rightarrow \infty} \int  | U^N(x)- U(x)|^p dx  \\ = &C_\alpha ^{1/\alpha}  \int  \bigg|   \sum_{k=1}^\infty 
 \rho_k\Gamma_k^{- 1/\alpha}    \bigg( \lim_{N\rightarrow \infty} 1_{[V_k\cdot e_1,1 ]\times \cdots \times  [V_k\cdot e_d,1 ] }(h  \lceil x/h \rceil ) \\
   -&  1_{[V_k\cdot e_1,1 ]\times \cdots \times  [V_k\cdot e_d,1 ] }(x) \bigg) \bigg|^p dx   = 0,
 \end{split}
\end{equation*}
since the indicator functions in Equation \eqref{eqn:sum} are multidimensional generalisations of right-continuous functions  on $[0,1]^d$.  Next, we consider the case of $1\leq \alpha <2$.  We will show that a sufficient condition 
for weak convergence, namely  
\begin{equation*}
\lim_{N\rightarrow \infty} \mathbb E\left[ g(U^N)- g(U)\right] = 0,
\end{equation*}
for all bounded Lipschitz functions $g$  on $L^p([0,1]^d)$, holds (see Corollary 2.3.5 in \cite{VIB18}). 

It is enough to show that the  conditional expectations 
\begin{equation*}
\mathbb E\left[ g(U^N)- g(U) \big| 
 \{\Gamma_k\}, \{V_k \}   \right] ,
\end{equation*}
converge almost surely to zero, since  we may exchange the order of the  limit and the  expectation in 
\begin{equation*}
 \lim _{N\rightarrow \infty }   \mathbb E\left [\mathbb E\left[ g(U^N)- g(U) \big|
 \{\Gamma_k\},\{ V_k\} \right]\right]
   =  
 \mathbb E\left [ \lim _{N\rightarrow \infty } \mathbb E\left[ g(U^N)- g(U) \big|
 \{\Gamma_k\},\{ V_k\} \right]\right],
\end{equation*}
by monotonicity of the conditional expectation and the  boundedness of $g$. Since $g$ is a Lipschitz function, the difference
\begin{equation*}
\left| \mathbb E\left[ g(U^N)- g(U)  \big| 
 \{\Gamma_k\}, \{V_k \}  \right]  \right|  \leq 
 C \mathbb E \left [ \Vert U^N- U\Vert_p     \big| \{\Gamma_k\}, \{V_k \} \right],
\end{equation*}
reduces to conditional expectation of the $L^p$-norm. Recalling that 
$\rho_k$ are independent from $\Gamma_k$ and $V_k$ leads to 
\begin{equation*}
\left| \mathbb E\left[ g(U^N)- g(U)  \big|  \{\Gamma_k\}, \{V_k \}   \right] \right|     \leq  
C \mathbb E  \left [ \left \Vert \sum_{k=1}^\infty \rho_k (\Gamma_k^0)^{-1/\alpha} \left( 
f( h  \lceil x/h \rceil , V_k^0) - f(x,V_k^0)\right ) \right\Vert_{p} \right],
\end{equation*}
with fixed values of  $V_k=V_k^0$ and $\Gamma_k = \Gamma_k^0$. We will proceed by dividing  the Rademacher series  in two parts as in 
\begin{equation*} \begin{split}
&\left| \mathbb E\left[ g(U^N)- g(U)   \big|  \{\Gamma_k\}, \{V_k \}  \right]  \right| \\ &\leq   
C \mathbb E  \left [  \left \Vert \sum_{k=1}^m  \rho_k (\Gamma_k^0)^{-1/\alpha} \left( 
f( h  \lceil x/h \rceil , V_k^0)  - f(x,V_k^0)  \right)\right \Vert_{p} \right] \\
 & + 
\left  \Vert \sum_{k=m+1}^\infty  \rho_k (\Gamma_k^0)^{-1/\alpha} \left( 
f( h  \lceil x/h \rceil , V_k^0)  - f(x,V_k^0)  \right) \right \Vert_{p} \\
&=  : I_1+I_2.
\end{split}
 \end{equation*}
 Assume first that $p>\alpha$.  For fixed $m$, the terms $I_1=I_1(N)$ converge  to zero as $N$ grows unlimited. By Lemma \ref{lem:uniformser}, the term $I_2=I_2(m)$ converges to zero uniformly with respect to $N$ as $m$ grows unlimited, since functions  $f( h  \lceil x/h \rceil , V_k^0)- f(x,V_k)$  are bounded in $L^p([0,1]^d)$. For $1\leq p \leq \alpha$, the result follows from  the continuous   imbedding of  $L^s(T)$ to $ L^t(T)$, when  $s>t$. 
 
  \end{proof}

\begin{thm}
\label{thm:conv_UN}
Let  $\mu_N$  and $\mu$ be the distributions of $U^N$ and $U$ on $L^p([0,1]^d)$, respectively. 
Let a  NDLL $\Phi$ be non-negative and satisfy Conditions WD2 and PC1.     Then 
the posterior distributions $\mu^y_N \propto\exp(-\Phi(u,y))\mu_N(du)$ converge to $\mu^y\propto \exp(-\Phi(u,y)) \mu(du)$ in weak topology. 
\end{thm}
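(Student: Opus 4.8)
The plan is to reduce the statement to the weak convergence of the priors $\mu_N\to\mu$ on $L^p([0,1]^d)$, already established in Theorem \ref{lem:first_pp}, by writing each posterior expectation as a ratio of integrals against a bounded continuous integrand. Recall that weak convergence of probability measures on the separable Banach space $L^p([0,1]^d)$ is characterized by $\int h\,\mu_N(du)\to\int h\,\mu(du)$ for every bounded continuous $h$. I would therefore verify $\mu^y_N\to\mu^y$ in weak topology by checking that $\mu^y_N(g)\to\mu^y(g)$ for an arbitrary bounded continuous test function $g:L^p([0,1]^d)\rightarrow\mathbb R$.

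First I would record that $u\mapsto\exp(-\Phi(u,y))$ is bounded and continuous for fixed $y$. Boundedness is immediate from the non-negativity of $\Phi$, which forces $0\leq\exp(-\Phi(u,y))\leq 1$; this is precisely where the heavy-tailed setting is accommodated, as no moment or exponential integrability of the prior is required. Continuity follows from Condition PC1, since $u\mapsto\Phi(u,y)$ is continuous into $\overline{\mathbb R}$ and $t\mapsto e^{-t}$ extends continuously to $[0,+\infty]$ with $e^{-\infty}=0$ (so the integrand vanishes continuously where $\Phi=+\infty$). Consequently, for any bounded continuous $g$ the product $u\mapsto g(u)\exp(-\Phi(u,y))$ is again bounded and continuous, and Theorem \ref{lem:first_pp} yields
\[
\int g(u)\exp(-\Phi(u,y))\,\mu_N(du)\longrightarrow\int g(u)\exp(-\Phi(u,y))\,\mu(du).
\]
Taking $g\equiv 1$ gives convergence of the normalizing constants $Z_N\to Z_y$, where $Z_y=\int_F\exp(-\Phi(u,y))\mu(du)$ and $Z_N$ is the analogous integral against $\mu_N$.

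Next I would confirm well-definedness of the limit, namely $0<Z_y<\infty$. Finiteness is free, since $\exp(-\Phi)\leq 1$ gives $Z_y\leq 1$. For positivity I would repeat the compactness argument from the proof of Theorem \ref{thm:WD}(ii): as every probability measure on a separable Banach space is Radon, choose a compact $K$ with $\mu(K^C)<\tfrac12$; Condition WD2 bounds $\Phi$ by some constant $C$ on the bounded set $K\times\{y\}$, whence $Z_y\geq\exp(-C)\mu(K)\geq\exp(-C)/2>0$. Since $Z_N\to Z_y>0$, we also have $Z_N>0$ for all large $N$, so the posterior ratios are eventually well-defined.

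Finally I would assemble the ratio: for every bounded continuous $g$,
\[
\mu^y_N(g)=\frac{\int g(u)\exp(-\Phi(u,y))\,\mu_N(du)}{Z_N}\longrightarrow\frac{\int g(u)\exp(-\Phi(u,y))\,\mu(du)}{Z_y}=\mu^y(g),
\]
combining the numerator convergence with $Z_N\to Z_y>0$. This establishes $\mu^y_N\to\mu^y$ in weak topology. The only delicate point is the continuity and boundedness of $\exp(-\Phi(\cdot,y))$ as an integrand: the non-negativity hypothesis guarantees boundedness and thereby lets the weak convergence of the priors pass directly through the exponential, while PC1 supplies the continuity in $u$ needed to invoke Theorem \ref{lem:first_pp}. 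Everything else is a routine continuous-mapping argument.
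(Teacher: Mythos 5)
Your proof is correct and takes essentially the same approach as the paper: the paper likewise passes the weak convergence $\mu_N\to\mu$ of Theorem \ref{lem:first_pp} through the integrand $\exp(-\Phi(\cdot,y))$, which is bounded by non-negativity and continuous by PC1, and invokes Theorem \ref{thm:WD} (the WD2 compactness argument you reproduce) for well-definedness. Your write-up merely makes explicit the steps the paper leaves implicit, namely the numerator convergence for a general bounded continuous test function $g$ and the final ratio assembly with $Z_N\to Z_y>0$.
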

\begin{proof}
The posterior distributions  are  well-defined on the set $L^p([0,1]^d)$ by 
Theorem \ref{thm:WD}.  Moreover,
\begin{equation*}
\lim_{N\rightarrow \infty} \int \exp(-\Phi(u,y)) \mu_N(du)= 
\int \exp(-\Phi(u,y)) \mu(du),
\end{equation*}
since $\mu_N$ converge weakly to $\mu$ and $\exp(-\Phi(u,y))$ is a bounded continuous function function on $L^p([0,1]^d)$ for every $y\in G$.
\end{proof}

Another approximation arises from the truncated LePage series
\begin{equation}\label{eqn:Lepage1}
U^N(x):= (C_\alpha)^{1/\alpha}\sum_{k=1}^N \rho_k \Gamma_k^{-1/\alpha} f(x,V_k).
\end{equation}
Below we provide an analogous theorem of Theorem \ref{thm:conv_UN} while using \ref{eqn:Lepage1} as $U^N$.
\begin{thm}
Let  $\mu_N$  and $\mu$ be the distributions of \eqref{eqn:Lepage1} and $U$ on $L^p([0,1]^d)$ (or $H^s_p$), respectively.  Let a  NDLL $\Phi$ be non-negative and satisfy Conditions WD2 and PC1.  Then 
the posterior distributions $\mu^y_N \propto\exp(-\Phi(u,y))\mu_N(du)$ converge to $\mu^y\propto \exp(-\Phi(u,y)) \mu(du)$ in total variation metric.
\end{thm}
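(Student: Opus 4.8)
The plan is to mirror the proof of Theorem~\ref{thm:conv_UN}, but to exploit the extra structure available for the truncated LePage series that the hypercube discretization lacks: the truncation $U^N$ and the limit $U$ live on a common probability space, and by Theorem~\ref{lem:LPpaths} (together with Theorem~\ref{thm:pathsob} in the $H^s_p$ case) $U^N\to U$ almost surely in $F$. It is this almost sure coupling, rather than mere weak convergence of the priors, that one hopes to convert into convergence of the posteriors in total variation.

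First I would settle well-definedness. Since $\Phi\ge 0$ we have $0\le \exp(-\Phi(u,y))\le 1$, so Condition WD1 holds with $D_\mu=G$ for every $\mu_N$ and for $\mu$; combined with WD2 and the Radon/tightness lower bound used in Theorem~\ref{thm:WD}(ii), each posterior $\mu_N^y$ and $\mu^y$ is well-defined on all of $G$. Next I would prove convergence of the normalizing constants. Writing $Z_N=\mathbb E[\exp(-\Phi(U^N,y))]$ and $Z=\mathbb E[\exp(-\Phi(U,y))]$, Condition PC1 gives continuity of $u\mapsto\Phi(u,y)$, so $U^N\to U$ a.s.\ yields $\exp(-\Phi(U^N,y))\to\exp(-\Phi(U,y))$ a.s.; as the integrands are dominated by $1$, dominated convergence gives both $Z_N\to Z$ and the sharper estimate $\mathbb E|\exp(-\Phi(U^N,y))-\exp(-\Phi(U,y))|\to 0$. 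Positivity $Z>0$ follows again from the tightness bound, so $Z_N\ge Z/2>0$ for $N$ large.

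For total variation itself I would test against Borel sets $A$ and use the coupling to write
\begin{equation*}
\mu_N^y(A)-\mu^y(A)=\frac{1}{Z_N}\mathbb E\big[1_A(U^N)e^{-\Phi(U^N,y)}\big]-\frac{1}{Z}\mathbb E\big[1_A(U)e^{-\Phi(U,y)}\big],
\end{equation*}
and split $\mathbb E[1_A(U^N)e^{-\Phi(U^N,y)}]-\mathbb E[1_A(U)e^{-\Phi(U,y)}]$ into three pieces. A normalizing-constant term, bounded uniformly in $A$ by $Z\,|Z_N^{-1}-Z^{-1}|\to 0$; a likelihood-mismatch term $\mathbb E[1_A(U^N)(e^{-\Phi(U^N,y)}-e^{-\Phi(U,y)})]$, bounded uniformly in $A$ by $\mathbb E|e^{-\Phi(U^N,y)}-e^{-\Phi(U,y)}|\to 0$ from the previous step; and a set-mismatch term $\mathbb E[(1_A(U^N)-1_A(U))e^{-\Phi(U,y)}]$.

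The last term is the real obstacle, and I expect it to be the crux of the whole argument. Because $1_A$ is only bounded measurable and not continuous, the almost sure convergence $U^N\to U$ does not force $1_A(U^N)\to 1_A(U)$, and the supremum over all Borel $A$ cannot be absorbed through the coupling the way the weak-convergence proof of Theorem~\ref{thm:conv_UN} absorbs bounded continuous test functions. Controlling $\sup_A\mathbb E[(1_A(U^N)-1_A(U))e^{-\Phi(U,y)}]$ is tantamount to a total-variation comparison of the laws of $U^N$ and $U$ themselves, and here one must quantify how the tail remainder $U-U^N=(C_\alpha)^{1/\alpha}\sum_{k>N}\rho_k\Gamma_k^{-1/\alpha}f(\cdot,V_k)$ smooths the conditional Rademacher law so that this term vanishes uniformly. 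Since the truncation $U^N$ depends on only finitely many of the variables $(\Gamma_k,V_k,\rho_k)$, its law concentrates on a much smaller set than that of $U$, so establishing the required smoothing/regularity of $U^N$ relative to $U$ — rather than the comparatively routine norming-constant and likelihood estimates — is where the genuine work of strengthening weak convergence to total variation must be done.
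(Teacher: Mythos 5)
Your three-term decomposition is sound, and your first two terms (normalizing constants and likelihood mismatch) are handled exactly as the paper handles them: PC1, almost sure convergence of the truncations, and dominated convergence give $Z_N\to Z$ and $\mathbb E\left|e^{-\Phi(U^N,y)}-e^{-\Phi(U,y)}\right|\to 0$. But the set-mismatch term you single out is not merely ``where the genuine work must be done'' --- it is an obstruction that cannot be overcome, and your remark that controlling it is tantamount to a total-variation comparison of the laws of $U^N$ and $U$ themselves is precisely the point at which the theorem fails. Let $A_N$ be the set of functions of the form $\sum_{k=1}^N c_k f(\cdot,v_k)$, $c_k\in\mathbb R$, $v_k\in[0,1]^d$; it is the continuous image of $\mathbb R^N\times[0,1]^{dN}$ in $L^p([0,1]^d)$, hence universally measurable, and $\mu_N(A_N)=1$ while $\mu(A_N)=0$, since the stable sheet almost surely has infinitely many jump discontinuities. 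Consequently there is a Borel set $B_N\supseteq A_N$ with $\mu_N(B_N)=1$ and $\mu(B_N)=0$, so $\mu_N\perp\mu$. Since $\Phi\geq 0$ and WD2 force $0<e^{-\Phi(u,y)}\leq 1$ everywhere, each posterior is equivalent to its prior, so $\mu_N^y\perp\mu^y$ as well, and for every $N$
\begin{equation*}
\sup_{A\in\mathcal B(F)}\left|\mu_N^y(A)-\mu^y(A)\right|\;\geq\;\mu_N^y(B_N)-\mu^y(B_N)\;=\;1.
\end{equation*}
Equivalently, at $A=B_N$ your third term equals $Z^{-1}\,\mathbb E\left[e^{-\Phi(U,y)}\right]=1$; no smoothing estimate can make it small, because the laws of $U^N$ and $U$ charge disjoint sets.

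This also pinpoints the error in the paper's own one-line proof: its display asserts
\begin{equation*}
\sup_{A}\left|\mu_N^y(A)-\mu^y(A)\right|=\mathbb E\left[\left|(Z_y^N)^{-1}e^{-\Phi(U^N,y)}-Z_y^{-1}e^{-\Phi(U,y)}\right|\right],
\end{equation*}
which tacitly identifies $1_A(U^N)$ with $1_A(U)$, i.e.\ discards exactly the term you isolated; the right-hand side does tend to zero, but it neither equals nor dominates the left-hand side, because the two posteriors have no common dominating measure along the coupling. What the hypotheses genuinely deliver is weak convergence of $\mu_N^y$ to $\mu^y$: replace $1_A$ by a bounded continuous test function, and then the analogue of your third term also vanishes by PC1, almost sure convergence in Theorem \ref{lem:LPpaths} (or Theorem \ref{thm:pathsob}), and dominated convergence --- the same conclusion and argument as Theorem \ref{thm:conv_UN}. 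So your proposal should not be ``completed''; it should be read as a correct diagnosis that the theorem's conclusion must be weakened from total variation to the weak topology (or the approximating priors modified so as not to be mutually singular with respect to $\mu$).
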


\begin{proof}
Since $\exp(-\Phi(u,y))$ are bounded continuous functions and  $U^N$ converges to $U$ 
almost surely by Theorem \ref{lem:LPpaths} (or Theorem \ref{thm:pathsob}), the result follows immediately by Lebesgue's dominated convergence theorem, when we write 
\begin{equation*}
\begin{split}
\sup_{A} |\mu^y_N(A)-\mu^y(A)|=  & \mathbb E \left[  \left|( Z_y^{N})^{-1}  \exp(-\Phi(U^N,y))- 
 Z_y^{-1} \exp(-\Phi(U,y)) \right| \right] \\
 \leq & \frac{\left| Z_y^{N}- Z_y\right|}{Z_y} +   (Z_{y})^{-1} \mathbb E\left[  \left|\exp(-\Phi(U^N,y))- 
 \exp(-\Phi(U,y)) \right| \right].
\end{split}
\end{equation*}
\end{proof}

{
\section{Conclusions}
{Our focus on this paper was to motivate an analytical understanding of $\alpha$-stable random fields through the application of Bayesian inversion. What we aimed to achieve was a well-posedness theorem and numerous convergence results for $\alpha$-stable  sheets. This was plausible with different representations that the random sheets could take.  Furthermore we were able to show that a discretized representation of the stable sheets remained consistent with the original form for increasing dimensions. The use and need of non-Gaussian priors for Bayesian inverse problems is apparent, and as a result this work leads to many other interesting directions of research:}}

\begin{itemize}
\item One avenue to take with this is to consider the numerical study and verification of $\alpha$-stable priors, where inference is done on the hyperparameters. This would include modifying the hyperparameters such as the stability parameter $\alpha$ depending on the underlying unknown and perhaps working in a hierarchical manner as done in \cite{CIRL17}.
\item From a Bayesian perspective understanding contraction rates \cite{HRSH15} of these priors poses useful insight. There has been extensive work on this with Gaussian priors which includes the inverse problem setting, \cite{ALS12,GGV00,VZ08}. For non-Gaussian priors some initial work has been done in the case of Besov priors \cite{ABDH17}. 
\item {Using the framework discussed in the paper could result in a way of modelling stable fields \cite{CLL08}. A natural application of this would be machine learning, where already there has been some work conducted on stable processes within neural networks \cite{DL06,RMN96}.}
\end{itemize}
These directions and more will be considered for future work.
\section*{Acknowledgements} 
NKC acknowledges a Singapore Ministry of Education Academic Research Funds Tier 2 grant [MOE2016-T2-2-135]. SL and LR were funded by Academy of Finland, grant numbers 326240 and 326341.
\bibliographystyle{plain}

\end{document}